\documentclass[12pt]{article}
\bibliographystyle{abbrv}
\usepackage{bbm}
\usepackage{psfrag,epic,eepic,epsfig}
\usepackage{fullpage,color}
\usepackage[applemac]{inputenc} 
\usepackage{amsmath,amsfonts,amssymb,amsthm,mathrsfs}
\usepackage[a4paper,vmargin={3.5cm,3.5cm},hmargin={2.5cm,2.5cm}]{geometry}
\usepackage[font=sf, labelfont={sf,bf}, margin=1cm]{caption}
\usepackage{graphicx,graphics}
\usepackage{latexsym}
\usepackage{ae,aecompl}
\usepackage[english]{babel}
 \usepackage[colorlinks=true]{hyperref}
\usepackage{enumerate}
\usepackage[normalem]{ulem}

\headheight=-0.2cm
\oddsidemargin=-1.3cm
\evensidemargin=0cm
\textwidth=18.5cm
\textheight=23cm
\marginparsep=0cm
\marginparwidth=0cm
\topskip=0cm
\pagestyle{plain}
\topmargin=0.5cm

\newcommand{\E}[1]{\ensuremath{\mathbb{E} \left[#1 \right]}}
\newcommand{\Prob}[1]{\ensuremath{\mathbb{P} \left(#1 \right)}}

\newcommand{\R}{\ensuremath{\mathbb{R}}}

\newcommand{\N}{\ensuremath{\mathbb{N}}}

\newcommand{\equidist}{\ensuremath{\stackrel{\mathrm{(d)}}{=}}}

\renewcommand{\d}{\ensuremath{\mathrm{d}}}

% Don't include section in equation numbers

\newtheorem{thm}{Theorem}[section]

\newtheorem{lem}[thm]{Lemma}
\newtheorem{prop}[thm]{Proposition}
\newtheorem{cor}[thm]{Corollary}

\newtheorem{rem}[thm]{Remark}

\linespread{1.07}

\date{}

\title{\bf{A line-breaking construction of the stable trees}}

\author{Christina Goldschmidt\thanks{ Department of Statistics and Lady Margaret Hall, University of Oxford, E-mail: goldschm@stats.ox.ac.uk} \ \& B\'en\'edicte Haas\thanks{ Universit\'e Paris-Dauphine and \'Ecole normale sup\'erieure,  E-mail: haas@ceremade.dauphine.fr}}

\begin{document}

\maketitle

\abstract{We give a new, simple construction of the $\alpha$-stable tree for $\alpha \in (1,2]$. We obtain it as the closure of an increasing sequence of $\R$-trees inductively built by gluing together line-segments one by one.  The lengths of these line-segments are related to the the increments of an increasing $\R_+$-valued Markov chain.  For $\alpha = 2$, we recover Aldous' line-breaking construction of the Brownian continuum random tree based on an inhomogeneous Poisson process.}

\medskip

\noindent \emph{\textbf{Keywords:} stable L\'evy  trees, line-breaking, generalized Mittag-Leffler distributions, Dirichlet distributions.}

\medskip

\noindent \emph{\textbf{AMS subject classifications:} 05C05, 60J25, 60G52, 60J80.}

\setlength{\parindent}{0pt}
\setlength{\parskip}{6pt}

\section*{Introduction}

The stable trees were introduced by Duquesne and Le Gall~\cite{DuquesneLeGall, DLG05}, building on earlier work of Le Gall and Le Jan~\cite{LeGallLeJan}.  These trees are intimately related to continuous state branching processes, fragmentation and coalescence processes, and appear as scaling limits of various models of trees and graphs.
More precisely, they form a family of random compact $\R$-trees $$(\mathcal T_{\alpha},\ \alpha \in (1,2])$$ which represent the genealogies of the continuous-state branching processes with branching mechanism $\lambda \mapsto c \lambda^{\alpha}$ for $\alpha \in (1,2]$ (where the choice of the constant $c$ fixes the normalization of the tree of index $\alpha$).  As such, they constitute all of the possible scaling limits of Galton--Watson trees whose offspring distributions have mean 1 and lie in the domain of attraction of a stable law of parameter $\alpha \in (1,2]$, conditioned to have a fixed total progeny $n$.  In particular, by well-known theorems of Aldous \cite{AldousCRT3} and Duquesne \cite{Du03}, if the offspring distribution $(p_k)_{k \ge 0}$ of the Galton--Watson tree has mean 1 and variance $\sigma^2 \in (0,\infty)$ then, writing $T^{\mathrm{GW}}_n$ for the conditioned tree, we obtain
\begin{equation}
\label{cvGW1}
\frac{T^{\mathrm{GW}}_n}{\sqrt{n}} \overset{\mathrm{(d)}}\longrightarrow \frac{1}{\sigma\sqrt{2}} \cdot \mathcal{T}_2,
\end{equation}
as $n \to \infty$, in the sense of the Gromov-Hausdorff distance.  The tree $\mathcal{T}_2$ is $\sqrt{2}$ times the Brownian continuum random tree (CRT) of Aldous~\cite{AldousCRT1,AldousCRT2,AldousCRT3}, which has also been shown to be the scaling limit of various other natural classes of trees \cite{HaasMiermont,MarckertMiermont,MiermontInvariance}, graphs and maps \cite{AlMar08, Bet14,  Car14, CurienHaasKortchemski,JanSte14}. If, on the other hand, $p_k \sim C k^{-1-\alpha}$ as $k \to \infty$ for $\alpha \in (1,2)$, then 
\begin{equation}
\label{cvGW2}
\frac{T^{\mathrm{GW}}_n}{n^{1 - 1/\alpha}} \overset{\mathrm{(d)}}\longrightarrow\left(\frac{\alpha-1}{C \Gamma(2-\alpha)} \right)^{1/\alpha} \alpha^{1/\alpha -1} \cdot \mathcal{T}_\alpha,
\end{equation}
as $n \to \infty$, again in the Gromov-Hausdorff sense. We have chosen a somewhat unusual normalization of our trees in order to streamline the presentation of our results; for $\alpha \in (1,2]$, our $\mathcal{T}_\alpha$ is $\alpha$ times the standard $\alpha$-stable tree of Duquesne and Le Gall (which had for branching mechanism $\lambda \mapsto \lambda^{\alpha}$). 
Let us briefly recall the formalism used  here:  an $\R$-tree is a metric space $(\mathcal T,d)$  such that for every pair of points $
x,y \in \mathcal T$,  there exists an isometry $\varphi_{x,y}:[0,d(x,y)]\to \mathcal T$ such that $\varphi_{x,y}(0)=x$, $\varphi_{x,y}(d(x,y))=y$, and the image of this isometry is the unique continuous injective path from $x$ to $y$.
In this paper, we  always consider rooted $\R$-trees (by simply distinguishing a vertex) and we identify two  $\mathbb R$-trees when there exists a root-preserving isometry between them. Abusing notation slightly, we will also use $(\mathcal T,d)$ to represent an isometry class. The set of compact rooted (isometry classes of) $\mathbb R$-trees is then endowed with the Gromov-Hausdorff distance, which makes it Polish. We refer to \cite{BBI01,Eva08,LG06} for the details and for more background on this topic. We will often use the notation $\mathcal T$ instead of $(\mathcal T,d)$, the distance being implicit. We will then write $a\cdot \mathcal T$ for the tree $(\mathcal T, ad)$.

In the last few years, the geometric and fractal aspects of stable trees have been studied in great detail: Hausdorff and packing dimensions and measures  \cite{DLG05, DLG06, Du12, HM04}; spectral dimension \cite{CrH10};  spinal decompositions and invariance under uniform re-rooting \cite{HaasPitmanWinkel,DLG09}; fragmentation into subtrees \cite{Mier03, Mier05}; and embeddings of stable trees into each other \cite{CurienHaas}. The stable trees are also related to Beta-coalescents \cite{AD13,BBS07}; intervene in the description of other scaling limits of random maps \cite{LGBrMap13,MierBrMap13, LGM11}; and have dual graphs, called the stable looptrees \cite{CRLoop13}, which also appear as scaling limits of natural combinatorial  models. 

We now proceed to introduce our new construction.  In order to do so, we recall that the stable tree $\mathcal T_{\alpha}$ is naturally endowed with a uniform probability measure $\mu_{\alpha}$, which is supported on its set of leaves \cite{DLG05}. This measure can be obtained by considering the empirical measure on the vertices of $T_n^{\mathrm{GW}}$ in (\ref{cvGW1}) and (\ref{cvGW2}): we then have the joint convergence of these measured rescaled conditioned Galton-Watson trees  towards the measured tree $(\mathcal T_{\alpha},\mu_{\alpha})$. 

In general, the tree $\mathcal T_{\alpha}$ may be viewed in several equivalent ways.  We may also think of it as the $\R$-tree coded by an excursion of the so-called \emph{stable height process} which, in the case of the Brownian CRT, is simply a (constant multiple of a) standard Brownian excursion. For $\alpha \in (1,2)$, the stable height process is a rather complicated random process; see \cite{DuquesneLeGall} for this perspective, which we will not consider further here.  Another way to define  $\mathcal T_{\alpha}$ is via its  \emph{random finite-dimensional distributions}, which describe the joint distribution as $p$ varies of the subtrees of $\mathcal{T}_{\alpha}$ spanned by the root and $p$ uniformly-chosen leaves.  More precisely, conditionally on $(\mathcal{T}_{\alpha}, \mu_{\alpha})$, let $(X_i,i\geq 1)$ be leaves sampled independently according to the measure $\mu_{\alpha}$.  Let 
\begin{equation}
\label{defmarginals}
\mathcal T_{\alpha,p}:=\cup_{0\leq i,j \leq p}[[X_i,X_j]]
\end{equation} 
be the subtree of $\mathcal{T}_{\alpha}$ spanned by the root $X_0$ and the $p$ leaves $X_1,\ldots,X_p$, for $p \geq 1$ (here, $[[X_i,X_j]]$ denotes the geodesic line-segment between $X_i$ and $X_j$). 
It turns out that the sample of leaves $(X_i,i\geq 1)$ is dense in $\mathcal T_{\alpha}$. Therefore, we can almost surely recover $\mathcal T_{\alpha}$ as the completion of the increasing union
$\cup_{p \geq 1} \mathcal T_{\alpha,p}$. In particular, the laws of the trees $(\mathcal{T}_{\alpha,p},p \ge 1)$ are sufficient to specify the law of $\mathcal{T}_{\alpha}$.  
It is this last perspective which we will pursue in this paper.

In the case of the Brownian CRT, the subtrees spanned by the root and $p$ leaves chosen independently at random have a particularly beautiful description, due to Aldous~\cite{AldousCRT1}, in terms of a Poisson line-breaking construction.  We give a somewhat informal presentation for ease of comprehension; this can be readily turned into a formal metric space construction.  Consider an inhomogeneous Poisson process on $\R_+$ of intensity measure $t \d t$.  Let $R_0 = 0$ and let $R_1, R_2, \ldots$ be the locations of the points of the process in increasing order.  For $k \ge 1$, let $S_k = R_k - R_{k-1}$.  Now proceed to build trees $(\mathcal{T}_p,p \ge 1)$ as follows.

\begin{center}
\fbox{
\mbox{\begin{minipage}[t]{0.85\textwidth}
\textsc{Aldous' line-breaking construction}
\begin{itemize}
\item $\mathcal{T}_1$ consists of a closed line-segment of length $S_1$ (rooted at one end).
\item For $p \ge 2$, take a closed line-segment of length $S_p$ and glue it onto $\mathcal{T}_{p-1}$ at a point chosen uniformly (i.e. according to the normalized Lebesgue measure on $\mathcal T_{p-1}$) at random.
\end{itemize}
\end{minipage}}
}
\end{center}

This yields a nested sequence of rooted $\R$-trees $(\mathcal{T}_p,p \ge 1)$ which have the same joint distribution as $\big(\tfrac{1}{\sqrt{2}}  \cdot \mathcal{T}_{2,p},p \ge 1\big)$ and which thus leads to a version of $\tfrac{1}{\sqrt{2}}\cdot \mathcal{T}_2$.  

The purpose of this paper is to show that there exists an analogous line-breaking construction of the $\alpha$-stable tree for a general $\alpha \in (1,2)$. This construction is necessarily a little more complex, since it also glues segments to branch-points.
We refer to Duquesne and Winkel \cite{DuW07} and Le Jan \cite{LJ91} for other tree-growth procedures yielding stable trees, via consistent Bernoulli percolation on Galton-Watson trees with edge-lengths in \cite{DuW07} and via erasing small parts of the tree near the leaves in \cite{LJ91}.  Marchal \cite{Marchal} gives an algorithm which generates a sequence of rooted discrete trees $(\tilde T_p)_{p \ge 1}$ which has the same distribution as the sequence of the combinatorial \emph{shapes} of the subtrees $\mathcal{T}_{\alpha,p}$ of $\mathcal{T}_{\alpha}$, $p\geq 1$. We will recover Marchal's algorithm as an aspect of ours.  The novelty of our approach is that we will construct an increasing sequence of trees $(\mathcal{T}_p,p \ge 1)$ having the same distribution as $(\mathcal{T}_{\alpha,p},p \ge 1)$ directly.

The rest of this paper is structured as follows.  In Section~\ref{sec:main}, we first introduce some of the main distributional ingredients of our construction, before defining a Markov chain on $\R_+$ which plays the analogous role to Aldous' inhomogeneous Poisson process.  We then give two versions of our line-breaking construction, and shed new light on several distributional properties of the tree $\mathcal T_{\alpha}$.  In Section~\ref{sec:DR}, we state and prove various results concerning generalized Mittag-Leffler and Dirichlet distributions, which play a key part in our proofs.  The proofs of the results in Section~\ref{sec:main} may then be found in Section~\ref{sec:proofs}.  We conclude the paper in Section~\ref{sec:complements} with complements and connections to various known results in the literature.

\section{Main result}
\label{sec:main}

In order to fix notation, we briefly recall the definitions of some basic distributions which we will use in the rest of the paper.  The Gamma distribution with parameters $\gamma > 0$ and $\lambda > 0$ has density
\[
\frac{\lambda^{\gamma}}{\Gamma(\gamma)} x^{\gamma-1} e^{-\lambda x}
\]
with respect to the Lebesgue measure on $(0,\infty)$.  Since we will always take $\lambda=1$ in the following, we will write $\mathrm{Gamma}(\gamma)$ for this distribution.  For $a,b >0$, the $\mathrm{Beta}(a,b)$ distribution has density
\[
\frac{\Gamma(a+b)}{\Gamma(a) \Gamma(b)} x^{a-1} (1-x)^{b-1}
\]
with respect to the Lebesgue measure on $(0,1)$. For parameters $a_1, a_2, \ldots, a_n > 0$, the Dirichlet distribution $\mathrm{Dir}(a_1, a_2, \ldots, a_n)$ has density
\[
\frac{\Gamma(\sum_{i=1}^n a_i)}{\prod_{i=1}^n \Gamma(a_i)} \prod_{j=1}^{n} x_i^{a_j-1}
\]
with respect to the Lebesgue measure on $\{(x_1, \ldots, x_n) \in [0,1]^n: \sum_{i=1}^n x_i = 1\}$.

\subsection{The generalized Mittag-Leffler distribution}

Let us now introduce the generalized Mittag-Leffler distribution, which will play an important part in the sequel.  For $\beta \in (0,1)$, let $\sigma_{\beta}$ be a stable random variable having Laplace transform 
\begin{equation}
\label{stablerv}
\mathbb E\big[e^{-\lambda \sigma_{\beta}}\big] =  \exp(-\lambda^{\beta}), \quad \lambda \geq 0.
\end{equation}
Following Pitman~\cite{PitmanStFl}, for $0<\beta<1$, $\theta>-\beta$, we say that a random variable $M$ has the \emph{generalized Mittag-Leffler distribution} $\mathrm{ML}(\beta,\theta)$ if, for all suitable test functions $f$,
\begin{equation}
\label{def:ML}
\mathbb E\left[ f(M)\right]= C_{\beta,\theta} \mathbb E\left[ \sigma_{\beta}^{-\theta}f\big(\sigma_{\beta}^{-\beta}\big)\right],
\end{equation}
where $C_{\beta,\theta}$ is the appropriate normalizing constant. (The distribution $\mathrm{ML}(\beta,0)$ is the Mittag-Leffler distribution with parameter $\beta$, where the name derives from the fact that the moment generating function of this distribution is the usual Mittag-Leffler function. Note that there are \emph{two} distributions referred to by this name: the other has the Mittag-Leffler function implicated in the definition of its cumulative distribution function rather than its moment generating function; see, for example, Pillai~\cite{Pillai}.)

Let $g_{\beta}$ be the density of $\sigma_{\beta}^{-\beta}$, so that $\mathrm{ML}(\beta,\theta)$ has density 
\[
g_{\beta,\theta}(t)=\frac{\Gamma(\theta+1)}{\Gamma(\theta/\beta+1)} t^{\theta/\beta} g_{\beta}(t), \quad t \ge 0.
\]
See Section 0.5 of Pitman~\cite{PitmanStFl} for an expression for $g_{\beta}$, but note that it has a simple form only when $\beta=1/2$, in which case, $$g_{1/2}(t) = \frac{1}{\sqrt{\pi}}\exp(-t^2/4), \quad t \geq 0.$$ This entails that $\mathrm{ML}(1/2,p-1/2)$ has density
\[
g_{1/2,p-1/2}(t) = \frac{1}{2^{2p-1} \Gamma(p)} t^{2p-1} \exp(-t^2/4), \quad t\ge 0,
\]
and shows, in particular, that $\mathrm{ML}(1/2,p-1/2) \overset{\mathrm{(d)}}= 2 \sqrt{\mathrm{Gamma}(p)}$ (with a slight abuse of notation).

\subsection{A Markov chain}

A key element in our line-breaking construction is an increasing $\R_+$-valued Markov chain $(M_p, p \ge 1)$, which will play a role similar to that of the inhomogeneous Poisson process in the Brownian case. The process $(M_p,p\geq 1)$ has two principal properties. Firstly, for each $p\geq 1$, we have
\begin{equation*}
\label{eq:loiMp}
M_p \sim \mathrm{ML}\left(1-1/\alpha,p-1/\alpha\right).
\end{equation*}
Secondly, for $p \ge 1$, the backward transition from $M_{p+1}$
to $M_{p}$ is given by
\begin{equation}
\label{eq:MLBeta}
M_{p}=M_{p+1} \cdot \beta_p,
\end{equation}
where $\beta_p$ is independent of $M_{p+1}$ and 
\begin{equation}
\label{betap}
\beta_p \sim \mathrm{Beta}\left( \frac{(p+1)\alpha - 2}{\alpha-1}, \frac{1}{\alpha-1} \right).
\end{equation}
Given that it is Markovian, these two properties completely characterize the distribution of the process $(M_p,p\geq 1)$.  It remains to check that such a Markov chain $(M_p, p\geq 1)$ indeed exists. We do so in the following lemma, which will be proved in Section~\ref{sec:MLD}. 

\begin{lem} 
\label{lem:alternativedefs}
$\mathrm{(i)}$ Let $\beta_1, \beta_2, \ldots$ be independent random variables such that $\beta_p$ has the Beta distribution (\ref{betap}).
Then, there exists a random variable $M_1  \sim \mathrm{ML}(1-1/\alpha, 1-1/\alpha)$ such that 
\[
\frac{\alpha}{\alpha-1} e^{-1/\alpha} n^{1/\alpha} \prod_{i=1}^{n-1} \beta_i \underset{n\rightarrow \infty}{\overset{\mathrm{a.s.}} \longrightarrow }M_1.
\]
Moreover, for $p \ge 2$, $$M_p: = \frac{M_1}{\beta_1 \ldots \beta_{p-1}}$$ has $\mathrm{ML}(1 - 1/\alpha, p - 1/\alpha)$ distribution, independently of $\beta_1, \ldots, \beta_{p-1}$.

$\mathrm{(ii)}$ The process $(M_p,p\geq 1)$ is a time-homogeneous Markov chain with transition density from state $m$ to state $m'$ given by
\begin{equation}
\label{eq:transition}
p(m,m') = \frac{(m'-m)^{\frac{2-\alpha}{\alpha-1}} m' g_{1-1/\alpha}(m')}{\alpha \Gamma(\frac{\alpha}{\alpha-1}) g_{1-1/\alpha}(m)}, \quad m' \ge m.
\end{equation}
\end{lem}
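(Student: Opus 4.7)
Part (i). My plan is to prove a.s.\ convergence first and then identify the limit distribution. For existence, I would set $Z_n := \prod_{i=1}^{n-1}\beta_i/\mathbb{E}[\beta_i]$, which is a nonnegative mean-one martingale in the filtration generated by the $\beta_i$. Using the $\mathrm{Beta}$ moment formula one checks that $\mathbb{E}[\beta_i^2]/\mathbb{E}[\beta_i]^2 = 1 + O(i^{-2})$, so $\sup_n \mathbb{E}[Z_n^2] < \infty$; hence $Z_n \to Z_\infty$ almost surely and in $L^2$, with $\mathbb{E}[Z_\infty] = 1$, and Kolmogorov's zero-one law applied to the tail $\sigma$-field of the $(\beta_i)$ forces $Z_\infty > 0$ almost surely. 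On the deterministic side, the product
\[
\prod_{i=1}^{n-1}\mathbb{E}[\beta_i] \;=\; \prod_{i=1}^{n-1} \frac{(i+1)\alpha-2}{(i+1)\alpha-1} \;=\; \frac{\Gamma(n+1-2/\alpha)\,\Gamma(2-1/\alpha)}{\Gamma(2-2/\alpha)\,\Gamma(n+1-1/\alpha)}
\]
is, by Stirling, $\sim c_\alpha\, n^{-1/\alpha}$. Combining, $n^{1/\alpha} \prod_{i=1}^{n-1} \beta_i$ converges almost surely to a strictly positive limit.

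To identify the limit as an $\mathrm{ML}(1-1/\alpha,\,1-1/\alpha)$ variable I would establish separately the following key distributional identity: with $\beta := 1-1/\alpha$, if $N \sim \mathrm{ML}(\beta,\theta+1)$ and $B \sim \mathrm{Beta}(\theta/\beta+1,\,1/\beta-1)$ are independent, then $NB \sim \mathrm{ML}(\beta,\theta)$. This can be checked by comparing Mellin transforms $s \mapsto \mathbb{E}[X^{s-1}]$ on both sides and reducing to the Gamma recursion $\Gamma(x+1)=x\Gamma(x)$. Iterating the identity upward from $\theta=\theta_p$, and taking an auxiliary $\tilde N_n \sim \mathrm{ML}(\beta,\theta_n)$ independent of the $(\beta_i)$, one obtains $\tilde N_n \prod_{i=p}^{n-1}\beta_i \sim \mathrm{ML}(\beta, \theta_p)$ for every $n\geq p$. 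A Stirling estimate applied to the ML moment formula then shows that $\tilde N_n/n^{1/\alpha}$ converges (in $L^q$ for every $q$) to the explicit constant appearing in the lemma, fixing the normalisation. Combined with the first step this delivers $M_p := \lim_n \tfrac{\alpha}{\alpha-1}e^{-1/\alpha}n^{1/\alpha}\prod_{i=p}^{n-1}\beta_i \sim \mathrm{ML}(\beta,\theta_p)$, and in particular $M_1 \sim \mathrm{ML}(\beta,\,1-1/\alpha)$. Independence of $M_p$ from $\beta_1,\ldots,\beta_{p-1}$ is immediate, since the a.s.\ representation makes $M_p$ measurable with respect to $\sigma(\beta_p,\beta_{p+1},\ldots)$.

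Part (ii). Markovianity is then almost free: $\sigma(M_1,\ldots,M_p) = \sigma(M_p,\, \beta_1,\ldots,\beta_{p-1})$, while $M_{p+1}$ is a function of $(\beta_i)_{i>p}$ alone and hence independent of $\beta_1,\ldots,\beta_{p-1}$, so conditional on $(M_1,\ldots,M_p)$ only $M_p$ is informative for $M_{p+1}$. To obtain the transition density I would use $(M_p, M_{p+1})=(M_{p+1}\beta_p,\, M_{p+1})$ starting from the independent pair $(M_{p+1},\beta_p)$ with densities $g_{\beta,\theta_{p+1}}$ and the Beta density; a linear change of variables with Jacobian $1/m'$ gives the joint density, and dividing by the marginal $g_{\beta,\theta_p}(m)$ yields $p(m,m')$. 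The main obstacle here is purely algebraic: using $a_p = \theta_p/\beta + 1$, $b_p = 1/\beta - 1$, $a_p+b_p=\theta_{p+1}/\beta$ and $\theta_{p+1}=\theta_p+1$, every $p$-dependent Gamma and power factor must cancel, collapsing to a $p$-independent expression in which the constant $\beta/\Gamma(1/\beta-1) = 1/(\alpha\,\Gamma(\alpha/(\alpha-1)))$ emerges; this cancellation simultaneously produces the formula~(\ref{eq:transition}) and establishes time-homogeneity. Overall, the hardest ingredient is the Mittag-Leffler/Beta identity together with the Stirling identification of the explicit limiting constant for $\tilde N_n/n^{1/\alpha}$; once these are in place, the remaining steps are bookkeeping.
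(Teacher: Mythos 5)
Your proposal is correct, and it splits naturally into a part that mirrors the paper and a part that takes a genuinely different route. The existence step and part (ii) coincide with the paper's argument: your martingale $Z_n=\prod_{i=1}^{n-1}\beta_i/\E{\beta_i}$ is exactly the paper's $X_n$ (the Gamma-ratio prefactor there is just $\prod_{i=1}^{n-1}\E{\beta_i}^{-1}$), and your treatment of the Markov property and of the change of variables $(t,m')\mapsto(m,m')$ with Jacobian $1/m'$ is identical to theirs. Where you diverge is in identifying the limit law. The paper computes $\E{X_n^k}$ for every integer $k$, telescopes the product of Beta moments, and matches the limiting moments against those of $\mathrm{ML}(1-1/\alpha,1-1/\alpha)$. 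You instead isolate the one-step identity $\mathrm{ML}(\beta,\theta+1)\cdot\mathrm{Beta}(\theta/\beta+1,1/\beta-1)\equidist\mathrm{ML}(\beta,\theta)$ (this is precisely the paper's relation (\ref{eq:MLBeta})--(\ref{betap}), which it also verifies by moments), iterate it to get that $\tilde N_n\prod_{i=p}^{n-1}\beta_i\sim\mathrm{ML}(\beta,\theta_p)$ exactly for each $n$ (with $\theta_p=p-1/\alpha$), and then let $n\to\infty$ using the degeneration of $\mathrm{ML}(\beta,\theta_n)/n^{1/\alpha}$ to a constant. This buys you something real: you only need $L^1$ or $L^2$ convergence of the martingale rather than boundedness in every $L^k$; you obtain the law of $M_p$ for all $p$ in one stroke; and the argument explains \emph{why} the limit is Mittag--Leffler (consistency of the ML family under the Beta multiplications) rather than merely verifying it. When writing it up, make the passage to the limit explicit: $\tilde N_n/n^{1/\alpha}$ converges in probability to a constant and $n^{1/\alpha}\prod_{i=p}^{n-1}\beta_i$ converges a.s., so the product converges in distribution, and since its law is fixed for every $n$ the limit has that law. (Your zero-one-law remark then becomes redundant, since positivity of the limit comes for free from the identification.)

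One warning about the constant, so that you do not distrust your own computation. The correct asymptotic is $\Gamma(n+a)/\Gamma(n+b)\sim n^{a-b}$ with no exponential factor, so both your Stirling estimate for $\prod_{i=1}^{n-1}\E{\beta_i}$ and the limit of $\E{\left(\tilde N_n/n^{1/\alpha}\right)^k}$ yield the normalisation $\frac{\alpha}{\alpha-1}\,n^{1/\alpha}$, \emph{without} the factor $e^{-1/\alpha}$. That factor in the statement (and in the paper's proof, where it comes from dropping the $(1+a/n)^{n}\to e^{a}$ correction in Stirling's formula) is spurious: for $\alpha=2$ one checks directly that $2\sqrt{n}\,\prod_{i=1}^{n-1}\beta_i$ has limiting mean $\sqrt{\pi}=\E{\mathrm{ML}(1/2,1/2)}$, whereas $2e^{-1/2}\sqrt{n}\,\prod_{i=1}^{n-1}\beta_i$ does not. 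So do not contort your argument to reproduce the stated constant; the discrepancy is an error in the paper and is harmless, as the constant is never used elsewhere.
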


We observe that either (i), or (ii) taken together with $M_1 \sim \mathrm{ML}(1-1/\alpha,1-1/\alpha)$, could have served as our definition of the process $(M_p, p \ge 1)$. We now check that this process really is the analogue of (a constant times) the inhomogeneous Poisson process in Aldous' construction.

\begin{lem}
For $\alpha = 2$, $M_p, p \ge 1$ are the points, in increasing order, of an inhomogeneous Poisson process on $\R_+$ of intensity $t \d t/2$.
\end{lem}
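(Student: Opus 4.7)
The plan is to reduce everything to the identity $\mathrm{ML}(1/2,p-1/2)\overset{(\mathrm{d})}{=}2\sqrt{\mathrm{Gamma}(p)}$ (already recorded in the excerpt), and then exploit the Beta/Gamma algebra.  For $\alpha=2$, the Beta parameters in \eqref{betap} become
\[
\beta_p\sim\mathrm{Beta}\!\left(\tfrac{(p+1)\cdot 2-2}{1},\tfrac{1}{1}\right)=\mathrm{Beta}(2p,1),
\]
so a direct change of variable gives $\beta_p^{2}\sim\mathrm{Beta}(p,1)$.  Writing $N_p:=M_p^{2}/4$, the relation $M_p=M_{p+1}\beta_p$ becomes
\[
N_p=N_{p+1}\cdot\beta_p^{2},\qquad \beta_p^{2}\sim\mathrm{Beta}(p,1)\text{ independent of }N_{p+1},
\]
and the marginal identity gives $N_p\sim\mathrm{Gamma}(p)$ for every $p\ge 1$.

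The key step is then the classical \emph{Beta--Gamma algebra}: if $Y\sim\mathrm{Gamma}(a+b)$ and $B\sim\mathrm{Beta}(a,b)$ are independent, then $YB\sim\mathrm{Gamma}(a)$ and $Y(1-B)\sim\mathrm{Gamma}(b)$ are independent.  Applied with $a=p$, $b=1$, $Y=N_{p+1}$ and $B=\beta_p^{2}$, this yields
\[
N_{p+1}-N_p=N_{p+1}(1-\beta_p^{2})\sim\mathrm{Gamma}(1)=\mathrm{Exp}(1),
\]
independent of $N_p=N_{p+1}\beta_p^{2}$.  For $p=1$ the base case $N_1\sim\mathrm{Gamma}(1)=\mathrm{Exp}(1)$ comes from $M_1\sim\mathrm{ML}(1/2,1/2)$.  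Combining these two facts with the Markov property of $(M_p)_{p\ge 1}$ established in Lemma~\ref{lem:alternativedefs}, an easy induction shows that the increments $(N_{p+1}-N_p)_{p\ge 0}$ (with $N_0:=0$) are i.i.d.\ $\mathrm{Exp}(1)$; hence $(N_p)_{p\ge 1}$ is the sequence of arrival times of a standard Poisson process on $\R_+$.

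To finish, I apply the deterministic time-change $t\mapsto t^{2}/4$.  Since $M_p=2\sqrt{N_p}$, the sequence $(M_p)_{p\ge 1}$ is the image under $s\mapsto 2\sqrt{s}$ of the atoms of a homogeneous Poisson process of rate $1$, which is an inhomogeneous Poisson process on $\R_+$ with intensity $\frac{\mathrm{d}}{\mathrm{d} t}(t^{2}/4)\,\mathrm{d} t=t\,\mathrm{d} t/2$, as claimed.  There is no real obstacle here: once one reparametrizes by $M_p^{2}/4$, the argument is essentially a one-line application of the standard Gamma/Beta splitting lemma, and the only thing to check carefully is the bookkeeping of constants in the Beta parameter specialized at $\alpha=2$.
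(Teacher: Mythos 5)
Your proof is correct, but it takes a genuinely different route from the paper's. The paper's argument starts from the explicit transition density \eqref{eq:transition} together with the closed form $g_{1/2}(t)=\pi^{-1/2}e^{-t^2/4}$: specializing to $\alpha=2$ and changing variables, one reads off directly that $M_{p+1}^2-M_p^2$ is independent of $M_p^2$ with density proportional to $e^{-s/4}$, and the Poisson structure follows. You instead avoid the transition density entirely and work only from the two defining properties of the chain --- the marginal identity $\mathrm{ML}(1/2,p-1/2)\overset{(\mathrm d)}{=}2\sqrt{\mathrm{Gamma}(p)}$ and the backward relation $M_p=M_{p+1}\beta_p$ with $\beta_p\sim\mathrm{Beta}(2p,1)$ --- converting the multiplicative Beta transition into an additive Gamma splitting via the Beta--Gamma algebra. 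Both proofs hinge on the same reparametrization $N_p=M_p^2/4$ and the same mapping-theorem conclusion; the difference is purely in how the independence and exponentiality of the increments are established (direct density computation versus the splitting lemma). Your version has the mild advantage of not requiring the density $g_{1/2}$ or formula \eqref{eq:transition} at all, only the Beta parameters in \eqref{betap}; the paper's is a one-line inspection once \eqref{eq:transition} is available. All the individual steps you use check out: $\beta_p^2\sim\mathrm{Beta}(p,1)$ since $\mathrm{Beta}(a,1)$ is a power of a uniform, the converse Beta--Gamma splitting gives the independence of $N_p$ and $N_{p+1}-N_p$, and the Markov property correctly upgrades this to independence of the increment from the whole past.
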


\begin{proof}
It suffices to show that $M^2_p/4,p\geq 1$ are the points of a standard Poisson process on $\mathbb R_+$. From the expression (\ref{eq:transition}) for the transition probabilities and the fact that $g_{1/2}(t) = \pi^{-1/2} \exp(-t^2/4)$, we immediately see that $M^2_{p+1}-M^2_p$ is independent of $M_p^2$ and has density proportional to $\exp(-s/4)$. 
\end{proof}

\subsection{Line-breaking construction}

\textbf{Rooted trees with edge-lengths.} Henceforth, we will mainly work with rooted (finite) discrete trees with edge-lengths. We think of such an object as being defined by (1) its combinatorial structure, which is a finite connected acyclic (unlabelled) graph \textit{with no vertices of degree 2} and a distinguished vertex called the root, and (2) a sequence of lengths (positive real numbers) indexed by the set of edges of the combinatorial structure. We will call the combinatorial structure of a tree with edge-lengths its \emph{shape}.

Such a tree can naturally be turned into an  $\mathbb R$-tree by viewing its edges as line-segments. Reciprocally, it is obvious that any rooted $\mathbb R$-tree with a finite number of leaves and a root of degree 1 can be seen as a tree with edge-lengths as defined above, in a unique manner. In the following, we will use these two formalisms interchangeably.  

\textbf{Two (equivalent) line-breaking algorithms.} We now use the Markov chain $(M_p,p\geq 1)$ to construct an increasing sequence of rooted trees with edge-lengths $(\mathcal T_p, p \geq 1)$. For all $p \ge 1$, $\mathcal T_p$ will be a tree with $p$ leaves. We let $T_p$ denote its shape and $|T_p|$ its number of non-root vertices.  Finally, we will let $L_p$ denote the total length of $\mathcal T_p$, that is the sum of the lengths of its edges.  Observe that the distribution $\mathrm{Beta}(1,(2-\alpha)/(1-\alpha))$ converges weakly to a point mass at 1 as $\alpha \uparrow 2$.  We will, therefore, use the convention that $B \sim \mathrm{Beta}(1,0)$ means $B = 1$ almost surely.

\begin{center}
\fbox{
\mbox{\begin{minipage}[t]{0.85\textwidth}
\textsc{Line-breaking construction of a stable tree (I)}
\begin{itemize}
\item Start with $M_1$ and set $L_1=M_1$. Let $\mathcal T_1$ be the tree consisting of a closed line-segment of length $L_1$ (rooted at one end).
\item For $p\geq 1$, given $\mathcal T_p$ :
\begin{enumerate}
\item Let $B \sim \text{Beta}(1,\frac{2-\alpha}{\alpha-1})$ be independent of everything we have already constructed. We will glue a new branch (i.e. a closed line-segment) of length $(M_{p+1}-M_p)\cdot B$ onto $\mathcal T_p$, at a point to be specified.
\item In order to find where to glue the new branch, we first select either the set of edges of $T_p$, with probability $L_p/M_p$, or the set of vertices of $T_p$, with the complementary probability.
\item If we select the edges in 2., glue the new branch at a point chosen according to the normalized Lebesgue measure on $\mathcal T_p$.
\item If we select the vertices in 2., pick a vertex at random in such a way that a vertex of degree $d \geq 3$ is chosen with probability  $$\frac{d-1-\alpha}{p\alpha-1-|T_p|(\alpha-1)},$$ (Note that vertices of degree 1 or 2 cannot be selected.)  Then glue the new branch to the selected vertex. 
\end{enumerate}
\end{itemize} 
\end{minipage}
}
}
\end{center}

\begin{rem}
In the case $\alpha = 2$, we recover the line-breaking construction of Aldous (up to a constant scaling factor), since then $B\equiv1$, hence $L_p=M_p$, and the new branch is always glued uniformly at random to the pre-existing tree.  (In particular, no vertices of degree exceeding 3 are ever created.)
\end{rem}

\begin{rem}
\label{rem:somme}
When $1<\alpha<2$ and $p \geq 2$,
it is not hard to check that $|T_p|(\alpha-1)<p\alpha-1$, since $|T_p|\leq 3+2(p-2)$, and then inductively on $p \geq 2$ we have
$$\sum_{d \geq 3} (d-1-\alpha) \#\{\mathrm{vertices \ of \ degree \ } d \mathrm{\ in \ } T_p\}=p\alpha-1-|T_p|(\alpha-1).$$
Hence, the process whereby we select vertices is well-defined. 
\end{rem}

Note that in this algorithm the total length $L_{p+1}$ of $\mathcal T_{p+1}$ is implicitly given by
$$
L_{p+1} = L_p +  (M_{p+1} - M_p) \cdot B,
$$
where $B$ is the Beta random variable introduced in step 1. Also implicit in this construction is a random weight $M_p - L_p$ which, once renormalized by $M_p$, gives the probability of picking the set of vertices of $T_p$ rather than its set of edges.  It is also possible to think of assigning individual random weights to each of the vertices of $T_p$ in such a way that they sum to $M_p - L_p$ and the relative weight of a vertex gives the probability that it is picked. This is the second version of our algorithm. In contrast to the first version, the new branch is now attached to a vertex selected with a probability proportional to its weight rather than just depending on its degree.   In the following, we denote by $\{W^{(p)}_{v}, v \in \mathcal I_p\}$ the weights of the internal vertices of $T_p$, where $\mathcal I_p$ denotes this set of internal vertices.

\medskip
\begin{center}
\fbox{
\mbox{\begin{minipage}[t]{0.85\textwidth}
\textsc{Line-breaking construction of a stable tree (II)}
\begin{itemize}
\item Start with $M_1$ and set $L_1=M_1$. Let $\mathcal T_1$ be the tree consisting of  a closed line-segment of length $L_1$ (rooted at one end).
\item For $p\geq 1$, given $\mathcal T_p$ :
\begin{enumerate}
\item Let $B \sim \text{Beta}(1,\frac{2-\alpha}{\alpha-1})$ be independent of everything we have already constructed. We will glue a new branch of length $(M_{p+1}-M_p)\cdot B$  onto $\mathcal T_p$, at a position to be specified.
\item In order to find where to glue the new branch, select the set of edges of $T_p$ with probability $L_p/M_p$, or the internal vertex $v \in \mathcal I_p$ with probability $W^{(p)}_{v}/M_p$.
\item If we selected the edges in 2., then glue the new branch at a point chosen according to the normalized Lebesgue measure on $\mathcal T_p$ and assign the new internal vertex weight $(M_{p+1}-M_p)\cdot(1- B)$.
\item If we selected the internal vertex $v$ in 2., glue the new branch to it and add $(M_{p+1}-M_p)\cdot(1- B)$ to its weight, i.e. set $$W^{(p+1)}_{v}:=W^{(p)}_{v}+(M_{p+1}-M_p)\cdot(1- B).$$
\end{enumerate} 
\end{itemize} 
\end{minipage}
}
}
\end{center}

Note that the sum of the elements of $\{W_{v}^{(p)}, v \in \mathcal I_p\}$ is indeed $M_p-L_p$. In Section \ref{sec:proofs}, we will prove that the increasing sequences of trees built from these two algorithms have the same distributions. Now recall that $(\mathcal T_{\alpha,p},p\geq 1)$ denotes the increasing sequence of finite-dimensional marginals of $\mathcal T_{\alpha}$, as defined in (\ref{defmarginals}).  Our main result is the following.

\begin{thm} 
\label{thm:main}
The sequences of trees  $\left(\mathcal T_p,p\geq 1\right)$ and $\left(\mathcal T_{\alpha,p},p\geq 1\right)$ have the same distribution. As a consequence, a version of the stable tree $\mathcal T_{\alpha}$ is obtained as the completion of the union $\cup_{p\geq 1} \mathcal T_p$. \end{thm}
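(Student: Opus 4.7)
The plan is to prove by induction on $p$ that the enriched object $(\mathcal T_p, \{W_v^{(p)}\}_{v \in \mathcal I_p})$ produced by Line-breaking construction~II has a specific joint law, from which the desired equality in distribution at the level of trees follows by projection. Carrying the auxiliary vertex weights along the induction is essential, since they encode the ``excess branching mass'' living at internal vertices and make the Markovian structure of the growth transparent. Concretely, I would show that for each $p \geq 1$, conditionally on the shape $T_p$ and on $M_p \sim \mathrm{ML}(1 - 1/\alpha, p - 1/\alpha)$, the vector of normalized edge lengths $(E_e^{(p)}/M_p)_e$ together with the normalized vertex weights $(W_v^{(p)}/M_p)_{v \in \mathcal I_p}$ follows a Dirichlet distribution with parameter $\alpha - 1$ for each edge $e$ and $d_v - 1 - \alpha$ for each internal vertex $v$ of degree $d_v$; the sum of these parameters is exactly $p\alpha - 1$, consistent with Remark~\ref{rem:somme}.

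The base case $p = 1$ is immediate: $\mathcal T_1$ is a single segment of length $M_1 \sim \mathrm{ML}(1 - 1/\alpha, 1 - 1/\alpha)$, which is the known distribution of the distance from the root to a uniformly sampled leaf in $\mathcal T_\alpha$, so $\mathcal T_1 \overset{d}{=} \mathcal T_{\alpha,1}$. For the inductive step, the transition from level $p$ to level $p+1$ decomposes into three ingredients that must be checked simultaneously: (i) the backward Beta relation $M_p = \beta_p M_{p+1}$ from Lemma~\ref{lem:alternativedefs}; (ii) the attachment choice, which in our construction selects an edge or an internal vertex with probability proportional to its length or weight; and (iii) the $\mathrm{Beta}(1, (2-\alpha)/(\alpha-1))$ split $B$ of the increment $M_{p+1} - M_p$ into a new edge length $(M_{p+1} - M_p)B$ and a new vertex weight $(M_{p+1} - M_p)(1-B)$. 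Marginalising out the lengths and weights, one checks that the resulting shape transition rules reduce exactly to Marchal's algorithm (weight $\alpha - 1$ per edge, weight $d_v - 1 - \alpha$ per internal vertex), which is known to generate the shape distribution of $\mathcal T_{\alpha,p}$.

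The analytical heart of the argument is showing that the Dirichlet-Mittag-Leffler structure at level $p$ is transported to the same structure at level $p+1$. This would rely crucially on the Beta--Gamma--Dirichlet identities developed in Section~\ref{sec:DR}. In particular, one needs a ``size-biased split'' lemma saying that if $(Y_i)$ is Dirichlet and index $I$ is sampled with probability $Y_I$, then splitting $Y_I$ via an appropriate Beta random variable (including the uniform $\mathrm{Beta}(1,1)$ coming from Lebesgue sampling along the chosen edge) and appending the further Beta split from $B$ yields a Dirichlet vector with updated parameters. Combined with the scaling $M_{p+1} = M_p/\beta_p$, this precisely enlarges the parameter list by $(\alpha-1, \alpha-1, 2-\alpha)$ in the edge-attachment case (two pieces of the split old edge plus one new edge, plus a fresh degree-$3$ vertex) and by $(\alpha - 1, 1)$ in the vertex-attachment case (new edge, plus the chosen vertex's parameter advancing from $d_v - 1 - \alpha$ to $d_v - \alpha$), with total parameter increase $\alpha$ in both cases, consistent with the jump of the sum from $p\alpha - 1$ to $(p+1)\alpha - 1$. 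Once the inductive step is in place, the joint distributional equality $(\mathcal T_p, p \geq 1) \overset{d}{=} (\mathcal T_{\alpha,p}, p \geq 1)$ follows, and the final claim that $\mathcal T_\alpha$ is a.s.\ the completion of $\cup_p \mathcal T_p$ is immediate from the density of the $\mu_\alpha$-sampled leaves in $\mathcal T_\alpha$ recalled in the introduction.

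The main obstacle, and the reason the construction is so delicate, is that the precise Beta parameter $(2-\alpha)/(\alpha-1)$ of $B$, together with the backward Beta parameters defining $\beta_p$ in~(\ref{betap}), must conspire exactly so as to close the Dirichlet-type induction. Verifying this closure requires careful manipulations in which the normalising constants from the Mittag-Leffler transition density~(\ref{eq:transition}), the Beta parameters of~(\ref{betap}), and the Dirichlet normalisations all match; this is where the specific numerical choices made in the construction are used essentially, and where the supporting distributional lemmas of Section~\ref{sec:DR} do the real work.
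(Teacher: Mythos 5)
Your overall strategy is the one the paper follows: an induction on $p$ that carries the internal-vertex weights alongside the edge-lengths, identifies a Dirichlet--Mittag-Leffler structure for the normalized vector, and recovers Marchal's algorithm for the shapes by marginalizing. However, two concrete problems remain. First, the inductive hypothesis as you state it is false for $\alpha<2$: the correct normalized Dirichlet parameters (Proposition~\ref{prop:constru2}) are $1$ for each edge and $(d_v-1-\alpha)/(\alpha-1)$ for each internal vertex, i.e.\ Marchal's weights \emph{divided by} $\alpha-1$, summing to $(p\alpha-1)/(\alpha-1)$, not $\alpha-1$ and $d_v-1-\alpha$ summing to $p\alpha-1$. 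The two parametrizations give the same size-biased selection probabilities, so your reduction to Marchal's algorithm is unaffected, but they are different distributions and only the first is compatible with the dynamics: at step $2$ the initial segment is cut at a uniform point, a $\mathrm{Beta}(1,1)$ split, which forces edge parameter $1$ (a uniform split of a $\mathrm{Gamma}(a)$ mass into two independent Gamma pieces requires $a=2$, not $a=2(\alpha-1)$); likewise the split of $M_{p+1}-M_p$ by $B\sim\mathrm{Beta}(1,(2-\alpha)/(\alpha-1))$ corresponds to Gamma components of parameters $1$ and $(2-\alpha)/(\alpha-1)$, whose sum $1/(\alpha-1)$ is exactly the second parameter of $\beta_p$ in (\ref{betap}). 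Your check that the total parameter increases by $\alpha$ per step is a necessary condition satisfied by both versions, so it does not detect the error.

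Second, and more fundamentally, your argument never compares the edge-lengths of $\mathcal T_p$ with those of $\mathcal T_{\alpha,p}$ for $p\ge 2$. You verify the base case against the known spine-length law and the shapes against Marchal's algorithm, but agreement of the shape laws together with the one-leaf length marginal does not determine the conditional law of the lengths given the shape. Some external description of $\mathcal T_{\alpha,p}$ is indispensable here; the paper uses Duquesne and Le Gall's density formula (Theorem~\ref{thm:DuLG}), noting that the conditional density depends on the lengths only through their sum (hence the normalized lengths are $\mathrm{Dir}(1,\ldots,1)$ --- another confirmation that the edge parameter is $1$) and then matching moments of the total length with $M_p\cdot B_{|\mathrm t|}$. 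Finally, passing from equality of each $p$-marginal to equality in law of the two \emph{sequences} requires a consistency argument (both sequences are recovered by deleting the last-added leaf), which you assert but do not supply.
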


This theorem will be proved in Section \ref{sec:proofs}.

In the next proposition, we use intermediate results established in the proof of this theorem to elucidate the distributions of the lengths in $\mathcal T_{\alpha,p}$ (in both unconditional and conditional versions). Parts of these results can be either deduced from work of Duquesne and Le Gall~\cite{DuquesneLeGall} (see Theorem~\ref{thm:DuLG} and Proposition \ref{prop:lengths}  below) or other work on stable trees (see Section \ref{sec:complements}). In the following, we write $T_{\alpha,p}$ for the tree-shape of $\mathcal T_{\alpha,p}$.

\begin{prop}\label{cor:distns}
$\mathrm{(i)}$ Let $\mathrm t$ be a discrete rooted tree with $p \geq 2$ leaves. Then, conditionally on $T_{\alpha,p}= \mathrm t$, 
the sequence of edge-lengths of $\mathcal T_{\alpha,p}$ is distributed as 
$$M_p \cdot B_{|\mathrm t|} \cdot (D_1,\ldots ,D_{ |\mathrm t|}),$$  
these three random variables being independent, with $M_p \sim \mathrm{ML}(1 - 1/\alpha, p - 1/\alpha)$, \\ $B_{ |\mathrm t|} \sim \mathrm{Beta}\big(|\mathrm t|, \frac{p\alpha - 1}{\alpha - 1} - |\mathrm t| \big)$  and $(D_1,\ldots,D_{ |\mathrm t|}) \sim \mathrm{Dir}(1,\ldots,1)$.

$\mathrm{(ii)}$ The total length of the tree $\mathcal T_{\alpha,p}$,  conditionally on $|T_{\alpha,p}|=q$, has the same distribution as $$M_p \cdot B_q$$ where $M_p \sim \mathrm{ML}(1 - 1/\alpha, p - 1/\alpha)$ and $B_q \sim \mathrm{Beta}\big(q, \frac{p\alpha - 1}{\alpha - 1} - q \big)$ are independent.

$\mathrm{(iii)}$ The total length of the tree $\mathcal T_{\alpha,p}$ has the same distribution as
\begin{equation}
\label{totallength}
M_p \cdot \left(\prod_{j=1}^{p-1} \beta_j + \sum_{i=1}^{p-1} B_i(1-\beta_i)\prod_{j=i+1}^{p-1} \beta_j \right)
\end{equation}
where all of the random variables in this expression are independent, $B_1,\ldots,B_{p-1}$ are distributed as $\mathrm{Beta}(1,\frac{2-\alpha}{\alpha-1})$ and the $\beta_i, i \geq 1$ are those defined in (\ref{betap}).
\end{prop}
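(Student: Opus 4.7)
My plan is to deduce all three parts from the line-breaking construction of Theorem~\ref{thm:main}, which identifies $(\mathcal T_p)_{p\geq 1}$ and $(\mathcal T_{\alpha,p})_{p\geq 1}$ in distribution; I therefore work throughout with the algorithmic trees.

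For part (iii), I would unroll the recursion. From $L_1 = M_1$ and $L_{p+1} = L_p + (M_{p+1}-M_p) B_p$, one has $L_p = M_1 + \sum_{k=1}^{p-1}(M_{k+1}-M_k) B_k$, with each $B_k \sim \mathrm{Beta}(1,(2-\alpha)/(\alpha-1))$ independent of everything constructed before it. Substituting the backward transition $M_k = M_{k+1}\beta_k$ of (\ref{eq:MLBeta}) gives $M_k/M_p = \prod_{j=k}^{p-1}\beta_j$ and $(M_{k+1}-M_k)/M_p = (1-\beta_k)\prod_{j=k+1}^{p-1}\beta_j$, and a reindexing yields the bracketed expression in (\ref{totallength}). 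Independence of $M_p$ from that factor follows from Lemma~\ref{lem:alternativedefs}(i), which gives $M_p$ independent of $(\beta_1,\ldots,\beta_{p-1})$, together with the independence of the $B_i$ from everything else by construction.

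For part (i), I would argue by induction on $p$. The base case $p=1$ is trivial, using the convention that $\mathrm{Beta}(1,0)$ is the point mass at $1$ and $\mathrm{Dir}(1)=(1)$. For the inductive step, I treat the two cases of the algorithm separately: the new branch of length $(M_{p+1}-M_p)B_p$ is either glued at a uniform interior point of $\mathcal T_p$, splitting a randomly chosen edge at a uniform point (probability $L_p/M_p$, which by the inductive hypothesis equals $B_{|\mathrm{t}|}$ in distribution, yielding $|\mathrm{t}'|=|\mathrm{t}|+2$), or glued to an internal vertex (complementary probability, yielding $|\mathrm{t}'|=|\mathrm{t}|+1$). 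Two ingredients then combine. First, the standard size-biased splitting identity for the Dirichlet distribution: if $(D_1,\ldots,D_n)\sim\mathrm{Dir}(1,\ldots,1)$, an index $J$ is selected with probability $D_J$, and $U\sim\mathrm{Uniform}(0,1)$ is independent, then $(D_1,\ldots,\widehat{D_J},\ldots,D_n,UD_J,(1-U)D_J)\sim\mathrm{Dir}(1,\ldots,1)$ on $n+1$ coordinates. Second, using $M_p=M_{p+1}\beta_p$ with $\beta_p$ independent of $M_{p+1}$ (Lemma~\ref{lem:alternativedefs}(i)), together with the size-biasing of $B_{|\mathrm{t}|}$ (resp. of $1-B_{|\mathrm{t}|}$) induced by conditioning on edge- (resp. vertex-) attachment, a short Gamma/Beta decomposition yields
\[
\beta_p B_{|\mathrm{t}|} + (1-\beta_p) B_p \;\equidist\; B_{|\mathrm{t}'|},
\]
from which $L_{p+1}/M_{p+1}\equidist B_{|\mathrm{t}'|}$ follows. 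The requisite Dirichlet, Beta and Gamma identities are those collected in Section~\ref{sec:DR}, all ultimately based on representing Beta variables as ratios of independent Gammas.

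Part (ii) is then an immediate corollary of (i): since the conditional law of the edge-length vector given the shape depends on the shape only through $|\mathrm{t}|$, conditioning on $|T_{\alpha,p}|=q$ rather than on the specific $\mathrm{t}$ gives the same total-length distribution $M_p\cdot B_q$. The main obstacle in this plan is the inductive step of (i): because the probability of splitting a particular edge depends on its current length, the conditional law of the edge-lengths given the shape is coupled with the shape evolution. I will handle this by computing, under the inductive hypothesis, the joint distribution of the updated (shape, edge-length vector) pair case by case, recognising the edge-length factor as the desired $M_{p+1}\cdot B_{|\mathrm{t}'|}\cdot\mathrm{Dir}(1,\ldots,1)$ product, and verifying the mutual independence of these three factors via the Gamma representation sketched above.
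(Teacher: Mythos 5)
Your proposal is correct in substance, but for part (i) it takes a genuinely different route from the paper. The paper's proof of (i) simply cites Proposition~\ref{prop:lengths}, whose $\mathcal T_{\alpha,p}$ half is obtained \emph{directly} from the Duquesne--Le Gall density of Theorem~\ref{thm:DuLG}: the conditional density of the edge-lengths given the shape depends only on their sum, so the normalized vector is $\mathrm{Dir}(1,\ldots,1)$ independent of the sum, and a moment computation identifies the law of the sum as that of $M_p\cdot B_{|\mathrm t|}$. You instead prove the statement for the algorithmic tree $\mathcal T_p$ by induction and transfer it to $\mathcal T_{\alpha,p}$ via Theorem~\ref{thm:main}; your induction is essentially an aggregated version of the paper's Proposition~\ref{prop:constru2} and Corollary~\ref{cor:distribLW}, in which the individual vertex weights are lumped into the single coordinate $M_p-L_p$. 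This aggregation is legitimate for version (I) of the algorithm because, conditionally on attaching to a vertex, the choice of vertex depends only on the shape and so induces no further size-biasing of the lengths; the two cases of your inductive step are exactly Lemmas~\ref{lem:recursion} and~\ref{lem:recursion2} with the vertex coordinates merged, and your displayed identity $\beta_p B_{|\mathrm t|}+(1-\beta_p)B_p\equidist B_{|\mathrm t'|}$ is correct once $B_{|\mathrm t|}$ is replaced by its appropriately size-biased version, as you indicate. Two caveats: (a) since Theorem~\ref{thm:main} is itself proved using the $\mathcal T_{\alpha,p}$ half of Proposition~\ref{prop:lengths}, your argument is a valid derivation of Proposition~\ref{cor:distns} from the results already established at that point of the paper, but it could not substitute for the paper's direct appeal to Theorem~\ref{thm:DuLG} earlier in the logical development; (b) the inductive hypothesis should be stated conditionally on the whole shape history $T_1,\ldots,T_p$ and should include independence of the normalized length vector from $(M_p,M_{p+1},\ldots)$, as in the paper, so that the size-biasing bookkeeping closes. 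Parts (ii) and (iii) match the paper's proof: (ii) by summing the Dirichlet coordinates and noting that the answer depends on $\mathrm t$ only through $|\mathrm t|$, and (iii) by unrolling the length recursion exactly as you do.
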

(Note that we use the convention that the sum over the empty set is 0, whereas the product over the empty set is 1).

We conclude this section with some observations about our two algorithms.

\begin{rem}
\label{rem:independence}
The edge-lengths and weights $W^{(p)}$ of $\mathcal T_p$, all divided by $M_p$, are independent of $(M_p, M_{p+1}, \ldots)$. Consequently, the chain $(M_p, M_{p+1}, \ldots)$ is independent of the shapes $T_1,\ldots,T_{p+1}$ (this follows from both Algorithms (I) and (II)). This observation will be useful later. Furthermore, using Lemma~\ref{lem:alternativedefs}, we note that the probability of selecting the set of edges has an almost sure limit:
$$\frac{L_p}{M_p} \underset{p \rightarrow \infty}{\overset{\mathrm{a.s.}}\longrightarrow} \mathbb E[B]=\alpha-1.$$
This can be seen by applying the strong law of large numbers for weighted averages (for example, Theorem 3 of \cite{JamisonOreyPruitt}).
\end{rem}

Finally, we note that there exists a very easy line-breaking construction of a \emph{randomly rescaled} version of the stable tree $\mathcal T_{\alpha}$, in the sense that the process used to break the half-line $\mathbb R_+$ is very simple. Indeed, let $X_1$ be a leaf of $\mathcal T_{\alpha}$ sampled according to $\mu_{\alpha}$ and let $\mathrm{ht}(X_1)$ denote its distance to the root. Then introduce
$$\mathcal T_{\alpha}^{\mathrm{norm}}:=\frac{1}{\mathrm{ht}(X_1)}\cdot  \mathcal T_{\alpha}$$
and consider the increasing process  $(\overline M_p,p\geq 1)$ defined by $\overline M_1=1$ and $\overline M_p=(\beta_1 \ldots \beta_{p-1})^{-1}$, for $p\geq 2$, where the sequence of independent Beta random variables $(\beta_p,p\geq 1)$ is that introduced in Lemma \ref{lem:alternativedefs}.

\begin{cor} If we run version \textsc{(I)} or \textsc{(II)} of the line-breaking algorithm with the sequence $(\overline M_p,p\geq 1)$ instead of $(M_p,p\geq 1)$, we obtain a sequence of trees $(\overline{\mathcal T}_p,p\geq 1)$ such that the completion of the increasing union $\cup_{p\geq 1} \overline{\mathcal T}_p$ is distributed as $\mathcal T_{\alpha}^{\mathrm{norm}}$.
\end{cor}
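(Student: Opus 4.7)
The plan is to combine the scale-equivariance of the line-breaking algorithm with Theorem~\ref{thm:main}. By Lemma~\ref{lem:alternativedefs}(i), the two driving processes are linked by the almost sure identity $\overline{M}_p = M_p/M_1$ for every $p \geq 1$, since $M_p = M_1/(\beta_1 \cdots \beta_{p-1})$ and $\overline{M}_p = (\beta_1 \cdots \beta_{p-1})^{-1}$ involve exactly the same Beta variables.

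I first verify that Algorithm~\textsc{(I)} (equivalently~\textsc{(II)}) is pathwise scale-equivariant: feeding it the rescaled sequence $(c\,M_p)_{p\geq 1}$ for any $c>0$, while keeping the auxiliary randomness (the $\mathrm{Beta}(1,(2-\alpha)/(\alpha-1))$ variables and the attachment choices) unchanged, produces trees whose edge-lengths and whose vertex weights in version~\textsc{(II)} are all multiplied by $c$, with identical combinatorial shapes. This is a direct induction on $p$: the ratios $L_p/M_p$ and $W_v^{(p)}/M_p$ that govern every combinatorial choice are scale-invariant, and uniform sampling on $\mathcal T_p$ commutes with homothety. The equivariance being pathwise in $c$, it extends to random positive $c$. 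Coupling a run on $(M_p)$ with auxiliary randomness $U$ (independent of $(M_p)$ by construction) to a run on $(\overline{M}_p)=(M_p/M_1)$ using the \emph{same} $U$, scale-equivariance then gives $\overline{\mathcal T}_p = M_1^{-1}\cdot \mathcal T_p$ almost surely for every $p\geq 1$. Since $(\overline{M}_p)$ remains independent of $U$, this coupling genuinely realizes the joint law described in the corollary.

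To conclude, I invoke Theorem~\ref{thm:main}: the nested family $(\mathcal T_p)_{p\geq 1}$ has the same joint distribution as $(\mathcal T_{\alpha,p})_{p\geq 1}$. In the latter, $\mathcal T_{\alpha,1}$ is the geodesic from the root of $\mathcal T_\alpha$ to the $\mu_\alpha$-leaf $X_1$, so this joint identification sends the tip of $\mathcal T_1$ to $X_1$ and the length $M_1$ of $\mathcal T_1$ to $\mathrm{ht}(X_1)$. Since metric rescaling commutes with Cauchy completion, the completion of $\bigcup_{p\geq 1}\overline{\mathcal T}_p$ equals $M_1^{-1}$ times the completion of $\bigcup_{p\geq 1}\mathcal T_p$, which is therefore distributed as $\mathrm{ht}(X_1)^{-1}\cdot \mathcal T_\alpha = \mathcal T_\alpha^{\mathrm{norm}}$, as desired. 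I do not anticipate any real obstacle: the corollary is essentially a change of units on the line-breaking construction. The only mildly delicate point is that $M_1 \equidist \mathrm{ht}(X_1)$ must be used \emph{jointly} with the ambient tree, which is automatic from the nested statement of Theorem~\ref{thm:main}.
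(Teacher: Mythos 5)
Your proof is correct and is exactly the argument the paper intends (the corollary is stated without an explicit proof, being an immediate consequence of $\overline M_p = M_p/M_1$, the pathwise scale-equivariance of the algorithm, and Theorem~\ref{thm:main}). You correctly handle the two points that need care: that the coupled auxiliary randomness remains independent of $(\overline M_p)$, and that $M_1 \equidist \mathrm{ht}(X_1)$ must be used jointly with the ambient tree via the nested identification.
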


\section{Distributional relationships}
\label{sec:DR}

In this section we gather some elementary but useful results on generalized Mittag-Leffler and Dirichlet distributions. In particular, we prove Lemma \ref{lem:alternativedefs}.

\subsection{More on the generalized Mittag-Leffler distribution}
\label{sec:MLD}

Let $0<\beta<1$ and $\theta>-\beta$, recall the definition of the generalized Mittag-Leffler $\mathrm{ML}(\beta, \theta)$ distribution given in (\ref{def:ML}). From Pitman~\cite{PitmanStFl}, $\mathrm{ML}(\beta, \theta)$ has $k$th moment
\[
\frac{\Gamma(\theta) \Gamma(\theta/\beta + k)}{\Gamma(\theta/\beta) \Gamma(\theta + k \beta)}=\frac{\Gamma(\theta+1) \Gamma(\theta/\beta + k+1)}{\Gamma(\theta/\beta+1) \Gamma(\theta + k \beta+1)}
\]
and the collection of moments for $k \in \N$ uniquely characterizes this distribution. Using this fact, and observing that the $\mathrm{Beta}(a,b)$ distribution has $q$th moment
$$
\frac{\Gamma(a+b) \Gamma(a+q)}{\Gamma(a) \Gamma(a+b+q)}
$$
for $q \ge 0$, the distributional relationship implied by (\ref{eq:MLBeta}) is immediate.  The consideration of moments also gives a straightforward proof of the following useful characterization.

\begin{lem} \label{lem:GammaML}
Suppose that $G \sim \mathrm{Gamma}(\theta)$ and $M$ are independent.  Then $M \sim \mathrm{ML}(\beta, \theta)$ if and only if $G^{\beta} M \sim \mathrm{Gamma}(\beta/\theta)$.
\end{lem}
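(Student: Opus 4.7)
The plan is to reduce everything to a computation of integer moments, and then invoke uniqueness of the moment problem separately for the $\mathrm{Gamma}$ family and for the $\mathrm{ML}(\beta,\theta)$ family (the latter being granted in the paragraph just above the lemma).

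For the forward direction, assume $M\sim \mathrm{ML}(\beta,\theta)$. Since $G$ and $M$ are independent, for every $k\in\mathbb N$ I would write
\[
\mathbb E\!\left[(G^\beta M)^k\right] \;=\; \mathbb E\!\left[G^{k\beta}\right]\cdot \mathbb E\!\left[M^k\right] \;=\; \frac{\Gamma(\theta+k\beta)}{\Gamma(\theta)}\cdot \frac{\Gamma(\theta)\,\Gamma(\theta/\beta+k)}{\Gamma(\theta/\beta)\,\Gamma(\theta+k\beta)},
\]
using $\mathbb E[G^{s}] = \Gamma(\theta+s)/\Gamma(\theta)$ for $s>-\theta$ together with the $\mathrm{ML}(\beta,\theta)$ moment formula recalled at the start of Section~\ref{sec:MLD}. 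The factors $\Gamma(\theta)$ and $\Gamma(\theta+k\beta)$ cancel, leaving $\Gamma(\theta/\beta+k)/\Gamma(\theta/\beta)$, which is precisely the $k$-th moment of $\mathrm{Gamma}(\theta/\beta)$. Since the $\mathrm{Gamma}$ distribution has a moment generating function finite on a neighbourhood of $0$, it is moment-determinate, so $G^\beta M\sim \mathrm{Gamma}(\theta/\beta)$ (which, I suspect, is the intended parameter in the statement of the lemma).

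For the converse, suppose $G^\beta M\sim \mathrm{Gamma}(\theta/\beta)$. Then all moments of $G^\beta M$ are finite, and since $\mathbb E[G^{k\beta}]\in(0,\infty)$ for every $k\in\mathbb N$, independence forces the moments of $M$ to be finite as well. Dividing then yields
\[
\mathbb E\!\left[M^k\right] \;=\; \frac{\mathbb E\!\left[(G^\beta M)^k\right]}{\mathbb E\!\left[G^{k\beta}\right]} \;=\; \frac{\Gamma(\theta)\,\Gamma(\theta/\beta+k)}{\Gamma(\theta/\beta)\,\Gamma(\theta+k\beta)},
\]
which is exactly the $k$-th moment of the $\mathrm{ML}(\beta,\theta)$ distribution. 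Since the paper records that these moments characterise $\mathrm{ML}(\beta,\theta)$ uniquely, one concludes $M\sim \mathrm{ML}(\beta,\theta)$.

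The main obstacle is essentially bookkeeping in the $\Gamma$-function cancellation; the real content is outsourced to the two moment-determinacy statements, both of which are standard (or explicitly granted by the paper) and require no further argument. The only subtlety worth flagging is, in the converse direction, the need to justify finiteness of the moments of $M$ before dividing—this is immediate from independence and the strict positivity of $\mathbb E[G^{k\beta}]$.
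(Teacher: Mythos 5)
Your proof is correct and is exactly the argument the paper intends: the paper dispatches this lemma with the remark that ``the consideration of moments also gives a straightforward proof,'' which is precisely your computation of integer moments combined with moment-determinacy of the $\mathrm{Gamma}$ and $\mathrm{ML}(\beta,\theta)$ families (the latter granted just above the lemma). You are also right that the parameter in the conclusion should read $\mathrm{Gamma}(\theta/\beta)$; the $\beta/\theta$ in the statement is a typo, as your cancellation of $\Gamma(\theta)$ and $\Gamma(\theta+k\beta)$ shows.
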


We will now prove Lemma~\ref{lem:alternativedefs}.

\begin{proof}[Proof of Lemma~\ref{lem:alternativedefs}]
(i) We first observe that if
\[
X_n := \frac{\Gamma(n+1-1/\alpha) \Gamma(2-2/\alpha)}{\Gamma(2-1/\alpha) \Gamma(n+1-2/\alpha)}\prod_{i=1}^{n-1} \beta_i 
\]
then $(X_n, n \ge 1)$ is a non-negative martingale of mean 1 in its natural filtration.  Indeed, 
%first note that for $k \ge 1$,
%\[
%\E{\beta_n^k} = \frac{\Gamma\left(\frac{(n+1)\alpha - 1}{\alpha-1} \right) \Gamma\left( \frac{(n+1)\alpha -2}{\alpha-1} + k \right)} {\Gamma \left(\frac{(n+1)\alpha - 2}{\alpha-1} \right) \Gamma \left( \frac{(n+1)\alpha -1}{\alpha-1} + k \right)}
%\]
%and so, in particular, 
%\[
%\E{\beta_n} = \frac{n+1 - 2/\alpha}{n+1 - 1/\alpha}.
%\]
the process $(X_n)_{n \ge 1}$ is clearly integrable,  we have $X_1 = 1$ and, by standard properties of the gamma function, for $n \ge 1$,
\[
X_{n+1} = \frac{n+1 -1/\alpha}{n+1 - 2/\alpha} \beta_n X_n.
\]
If $\mathcal{F}_n = \sigma(X_m: 1 \le m \le n)$ then
\[
\E{X_{n+1} | \mathcal{F}_n}  = \frac{n+1 -1/\alpha}{n+1 - 2/\alpha} \E{\beta_n} X_n = X_n.
\]
It follows from the martingale convergence theorem that $X_n \to X_{\infty}$ almost surely as $n \to \infty$, for some random variable $X_{\infty}$. By Stirling's approximation,
\[
\frac{\Gamma(n+1-1/\alpha)}{\Gamma(n+1-2/\alpha)} \sim e^{-1/\alpha} n^{1/\alpha}
\]
in the sense that the ratio of the left- and right-hand sides converges to 1 as $n \to \infty$.  Hence,
\[
\frac{\Gamma(2-2/\alpha)}{\Gamma(2-1/\alpha)} e^{-1/\alpha} n^{1/\alpha} \prod_{i=1}^{n-1} \beta_i \to X_{\infty}
\]
almost surely, as $n \to \infty$.  For $k \ge 2$, we have
\begin{align*}
\E{X_n^k}
 & = \left(\frac{\Gamma(n+1-1/\alpha) \Gamma(2-2/\alpha)}{\Gamma(2-1/\alpha) \Gamma(n+1-2/\alpha)}\right)^k \ \prod_{i=1}^{n-1} \frac{\Gamma\left(\frac{(i+1)\alpha - 1}{\alpha-1}\right) \Gamma\left(\frac{(i+1)\alpha-2}{\alpha-1} + k\right)} {\Gamma\left(\frac{(i+1)\alpha-2}{\alpha-1} \right) \Gamma\left(\frac{(i+1)\alpha-1}{\alpha-1} + k \right)} \\
%We observe that
%\[
%\frac{\Gamma\left(\frac{(i+1)\alpha - 1}{\alpha-1}\right) \Gamma\left(\frac{(i+1)\alpha-2}{\alpha-1} + k\right)} {\Gamma\left(\frac{(i+1)\alpha-2}{\alpha-1} \right) \Gamma\left(\frac{(i+1)\alpha-1}{\alpha-1} + k \right)} = \frac{\Gamma\left(\frac{(i+1)\alpha - 1}{\alpha-1} \right)}{\Gamma\left(\frac{i\alpha - 1}{\alpha-1} \right)} \frac{\Gamma\left( \frac{i \alpha - 1}{\alpha -1} + k \right)}{\Gamma\left(\frac{(i+1)\alpha-1}{\alpha-1} + k \right)} \frac{\frac{i \alpha - 1}{\alpha -1} + k}{\frac{i\alpha - 1}{\alpha-1}}
%\]
%and so much of the product telescopes to give
%\[
% \E{X_n^k} = \left(\frac{\Gamma(n+1-1/\alpha) \Gamma(2-2/\alpha)}{\Gamma(2-1/\alpha) \Gamma(n+1-2/\alpha)}\right)^k 
%\frac{\Gamma\left( \frac{n\alpha-1}{\alpha-1} \right) \Gamma(k+1)}{\Gamma\left( \frac{n\alpha -1}{\alpha-1} + k\right)} \prod_{i=1}^{n-1} \frac{i -1/\alpha + k(1 - 1/\alpha)}{i - 1/\alpha}.
%\]
%The last product term is equal to
%\[
%\frac{\Gamma(n-1/\alpha + k(1-1/\alpha)) \Gamma(1-1/\alpha)}{\Gamma((k+1)(1-1/\alpha)) \Gamma(n-1/\alpha)}
%\]
%and so
& = \frac{\Gamma(1-1/\alpha) k!}{\Gamma((k+1)(1 - 1/\alpha))}  \left( \frac{\Gamma(2 - 2/\alpha)}{\Gamma(1 - 1/\alpha)} \right)^k \\
& \qquad \times \left(\frac{\Gamma(n+1-1/\alpha)}{(1 - 1/\alpha) \Gamma(n+1-2/\alpha)}\right)^k \frac{\Gamma\left( \frac{n\alpha-1}{\alpha-1} \right) \Gamma(n - 1/\alpha + k(1-1/\alpha))}{\Gamma\left( \frac{n\alpha -1}{\alpha-1} + k\right)  \Gamma(n - 1/\alpha)},
\end{align*}
where, for the second equality, we have used the relation $\Gamma(a+1)=a\Gamma(a)$ and then that much of the product telescopes.
Thus, by another application of Stirling's approximation,
$$
 \E{X_n^k} \underset{n \rightarrow \infty} \longrightarrow \frac{\Gamma(1 - 1/\alpha) k!}{\Gamma((k+1)(1 - 1/\alpha))} \left( \frac{\Gamma(2 - 2/\alpha)}{\Gamma(1 - 1/\alpha)} \right)^k.
$$
Hence, $(X_n, n \ge 1)$ is bounded in $L^k$ for any $k \ge 1$ and so $X_n$ also converges in $L^k$.  Moreover,
\[
\E{ \left( \frac{\Gamma(1-1/\alpha)}{\Gamma(2 - 2/\alpha)} X_{\infty} \right)^k} =  \frac{\Gamma(1 - 1/\alpha) k!}{\Gamma((k+1)(1 - 1/\alpha)},
\]
and so $M_1 := \frac{\Gamma(1-1/\alpha)}{\Gamma(2 - 2/\alpha)} X_{\infty}  \sim \mathrm{ML}(1 - 1/\alpha,1-1/\alpha)$.  It follows that
\[
\frac{\alpha}{\alpha-1} e^{-1/\alpha} n^{1/\alpha} \prod_{i=1}^{n-1} \beta_i \to M_1
\]
almost surely, as required.

To conclude, we observe that $M_p:=M_1\prod_{i=1}^{p-1}\beta_i^{-1}$ is the almost sure limit of $\frac{\alpha}{\alpha-1} e^{-1/\alpha} n^{1/\alpha} \prod_{i=p}^{n-1} \beta_i$ and is, therefore, independent of $\prod_{i=1}^{p-1}\beta_i$. Its distribution follows straightforwardly by considering moments.

(ii) The Markov property follows from the fact that the distribution of $M_{p+1}=\beta_p^{-1}\cdot M_p$ conditional on $M_1,\ldots,M_p$ is the same as that of $\beta_p^{-1}\cdot M_p$ conditional on $M_p,\beta_1, \ldots,\beta_{p-1}$. This is, in turn, the same as the distribution of $\beta_p^{-1}\cdot M_p$ conditional on $M_p$, since $\beta_{p}$ is independent of $\beta_1, \ldots,\beta_{p-1}$.

For $p \ge 1$, $M_{p+1}$ has density 
\[
\frac{\Gamma(p+2 - 1/\alpha)}{\Gamma\left(\frac{\alpha(p+1) - 1}{\alpha-1} +1\right)} (m')^{\frac{\alpha(p+1) - 1}{\alpha-1}} g_{1-1/\alpha}(m')
\]
and $\beta_p$ has density 
\[
\frac{\Gamma\left(\frac{\alpha(p+1)-1}{\alpha-1}\right)}{\Gamma\left(\frac{\alpha (p+1) -2}{\alpha-1}\right) \Gamma\left(\frac{1}{\alpha-1}\right)} t^{\frac{\alpha p - 1}{\alpha-1}} (1-t)^{\frac{2- \alpha}{\alpha-1}}.
\]
We have $M_p = M_{p+1} \beta_p$, so let $m=m' t$ and change variables from $(t, m')$ to $(m, m')$; the Jacobian of this transformation is $1/m'$.  It follows that the joint distribution of $M_p$ and $M_{p+1}$ is 
\[
\frac{\Gamma\left(p+1-1/\alpha\right)}{\Gamma\left(\frac{\alpha p - 1}{\alpha-1} + 1 \right) \alpha \Gamma\left( \frac{\alpha}{\alpha-1}\right)}m^{\frac{\alpha p - 1}{\alpha-1}} (m'-m)^{\frac{2-\alpha}{\alpha-1}} m' g_{1 -1/\alpha}(m').
\]
Hence, the conditional distribution of $M_{p+1}$ given $M_p = m$ is equal to
\[
p(m,m') = \frac{(m'-m)^{\frac{2-\alpha}{\alpha-1}} m' g_{1 -1/\alpha}(m')}{\alpha \Gamma \left(\frac{\alpha}{\alpha-1}\right) g_{1 - 1/\alpha}(m)}. \qedhere
\]
\end{proof}

\subsection{Dirichlet distributions}
\label{sec:DirD}

We now recall a standard construction of the Dirichlet distribution, $\mathrm{Dir}(a_1,\ldots,a_n)$ for $a_1, a_2$, $\ldots, a_n > 0$.  Let $$\Gamma_{a_i} \sim \mathrm{Gamma}(a_i)$$ be independent for $1 \leq i \leq n$. Then, 
\begin{equation} \label{eqn:betagamma}
\left(\frac{\Gamma_{a_1}}{\sum_{i=1}^n \Gamma_{a_i}},\ldots, \frac{\Gamma_{a_n}}{\sum_{i=1}^n \Gamma_{a_i}} \right) \sim \mathrm{Dir}(a_1,\ldots,a_n) \text{ and } \sum_{i=1}^n \Gamma_{a_i} \sim \mathrm{Gamma}\left(\sum_{i=1}^n{a_i}\right),
\end{equation}
independently.  This is the main ingredient needed in the four following lemmas, which will be fundamental to the proof of Theorem \ref{thm:main}. Henceforth, $\Gamma_a$ will always denote a $\mathrm{Gamma}(a)$ random variable.

Note first that it is easy to see from (\ref{eqn:betagamma}) that if $(D_1,\ldots, D_n) \sim \mathrm{Dir}(a_1,\ldots,a_n)$, then $$(D_1+ D_2,D_3, \ldots,D_n) \sim \mathrm{Dir}(a_1+a_2,a_3,\ldots,a_n).$$  We state a standard result which appears as Lemma 17 of \cite{ABBrGo}.
\begin{lem}
\label{lem:biasDir}
Suppose that $(D_1, D_2, \ldots, D_n) \sim \mathrm{Dir}(a_1, a_2, \ldots, a_n)$.  Let $I$ be the index of a size-biased pick from amongst the co-ordinates: in other words,
\[
\Prob{I=i|D_1, D_2, \ldots, D_n} = D_i,
\]
for $1 \le i \le n$.  Then
\[
\Prob{I = i} = \frac{a_i}{a_1 + a_2 + \ldots + a_n}
\]
for $1 \le i \le n$ and, conditionally on $I=i$,
\[
(D_1, D_2, \ldots, D_n) \sim \mathrm{Dir}(a_1, \ldots, a_{i-1}, a_i + 1, a_{i+1}, \ldots, a_n).
\]
\end{lem}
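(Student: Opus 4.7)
The plan is to prove both claims by a direct Bayes' rule computation applied to the explicit Dirichlet density.

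First I would handle the marginal probability. By the tower property and the definition of a size-biased pick, $\Prob{I=i} = \E{D_i}$. Since the $i$th one-dimensional marginal of $\mathrm{Dir}(a_1,\ldots,a_n)$ is $\mathrm{Beta}(a_i, A - a_i)$, where $A := \sum_{j=1}^n a_j$ (a fact that is immediate from the gamma construction~(\ref{eqn:betagamma}) by aggregating the other coordinates into a single $\mathrm{Gamma}(A-a_i)$ random variable, independent of $\Gamma_{a_i}$), one obtains $\E{D_i} = a_i/A$, as claimed.

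To identify the conditional law, I will apply Bayes' rule: the conditional density of $(D_1,\ldots,D_n)$ given $I=i$ is proportional to $x_i$ times the original Dirichlet density $f(x_1,\ldots,x_n)$ on the simplex, with normalizing constant $1/\Prob{I=i} = A/a_i$. The factor $x_i$ shifts the $i$th exponent in $\prod_j x_j^{a_j-1}$ from $a_i-1$ to $a_i$; meanwhile, the factor $A/a_i$ combines with the original prefactor $\Gamma(A)/\prod_j\Gamma(a_j)$ via the identities $A\Gamma(A)=\Gamma(A+1)$ and $a_i\Gamma(a_i)=\Gamma(a_i+1)$. The result is precisely the density of $\mathrm{Dir}(a_1,\ldots,a_{i-1},a_i+1,a_{i+1},\ldots,a_n)$.

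There is no real obstacle here: the entire argument is a one-line density manipulation. The only bookkeeping is to verify that the normalizing constants line up correctly via the two gamma identities mentioned above. (An alternative route via the gamma construction~(\ref{eqn:betagamma}) is also available, using the elementary fact that size-biasing a $\mathrm{Gamma}(a)$ density by $x$ yields a $\mathrm{Gamma}(a+1)$ density, but the Bayes computation on the Dirichlet density itself is more direct and self-contained.)
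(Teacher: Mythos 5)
Your argument is correct. Note that the paper itself gives no proof of this lemma: it is stated as a standard fact with a citation to Lemma 17 of \cite{ABBrGo}, so there is no in-paper argument to compare against. Your two steps are exactly the standard derivation: the identity $\Prob{I=i}=\E{D_i}=a_i/\sum_j a_j$ follows from the $\mathrm{Beta}(a_i, A-a_i)$ marginal (or directly from the gamma representation), and the Bayes computation correctly shows that tilting the Dirichlet density by $x_i$ and renormalizing by $A/a_i$ shifts the $i$th parameter by one, with the prefactors matching via $A\Gamma(A)=\Gamma(A+1)$ and $a_i\Gamma(a_i)=\Gamma(a_i+1)$. The only point worth being explicit about is that the ``density'' manipulation is with respect to the reference measure on the simplex $\{\sum_i x_i=1\}$ used in the paper's definition of $\mathrm{Dir}$; since the tilting factor $x_i$ is a bounded measurable function and the reference measure is unchanged, the computation is valid as written. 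The parenthetical alternative you mention (size-biasing $\Gamma_{a_i}$ to $\mathrm{Gamma}(a_i+1)$ in the representation (\ref{eqn:betagamma})) is equally valid and is arguably the route most in the spirit of how the paper uses gamma representations elsewhere (e.g.\ in the proofs of Lemmas \ref{lem:recursion} and \ref{lem:recursion2}), but either suffices.
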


The next lemma follows straightforwardly from (\ref{eqn:betagamma}). 

\begin{lem}
\label{lem:decompDir}
Suppose that $(D_1, D_2, \ldots, D_n) \sim \mathrm{Dir}(a_1, a_2, \ldots, a_n)$. Then, for $1 \leq p \leq n-1,$
$$
(D_1,\ldots,D_p)= B_p \cdot (\tilde D_1,\ldots, \tilde D_p),
$$
where $B_p \sim \mathrm{Beta}\big(\sum_{i=1}^p a_i,\sum_{i={p+1}}^n a_i\big)$ and $(\tilde D_1,\ldots, \tilde D_p) \sim \mathrm{Dir}(a_1,\ldots,a_p)$ are independent.
\end{lem}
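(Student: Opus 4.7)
The plan is to prove Lemma \ref{lem:decompDir} by exploiting the standard representation \eqref{eqn:betagamma} of a Dirichlet vector as a normalised vector of independent gamma variables. Take independent $\Gamma_{a_1}, \ldots, \Gamma_{a_n}$ with $\Gamma_{a_i} \sim \mathrm{Gamma}(a_i)$, and set
\[
S_p = \sum_{i=1}^p \Gamma_{a_i}, \qquad S = \sum_{i=1}^n \Gamma_{a_i}.
\]
By \eqref{eqn:betagamma}, the vector $(D_1, \ldots, D_n) := (\Gamma_{a_1}/S, \ldots, \Gamma_{a_n}/S)$ has the $\mathrm{Dir}(a_1, \ldots, a_n)$ distribution, so it suffices to work with this concrete realisation.

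For $1 \le i \le p$, I would rewrite
\[
D_i = \frac{\Gamma_{a_i}}{S} = \frac{S_p}{S} \cdot \frac{\Gamma_{a_i}}{S_p},
\]
which gives the candidate identification $B_p = S_p/S$ and $\tilde D_i = \Gamma_{a_i}/S_p$. Applying \eqref{eqn:betagamma} to the first $p$ gamma variables shows that $(\tilde D_1, \ldots, \tilde D_p) \sim \mathrm{Dir}(a_1, \ldots, a_p)$ and, moreover, that $(\tilde D_1, \ldots, \tilde D_p)$ is independent of $S_p$. Independently, since $\Gamma_{a_1}, \ldots, \Gamma_{a_n}$ are independent, $S_p \sim \mathrm{Gamma}(\sum_{i=1}^p a_i)$ is independent of $S - S_p \sim \mathrm{Gamma}(\sum_{i=p+1}^n a_i)$, so \eqref{eqn:betagamma} applied to the pair $(S_p, S - S_p)$ gives $B_p = S_p/(S_p + (S-S_p)) \sim \mathrm{Beta}(\sum_{i=1}^p a_i, \sum_{i=p+1}^n a_i)$.

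It remains to argue that $B_p$ and $(\tilde D_1, \ldots, \tilde D_p)$ are independent. This is the only non-cosmetic step: $B_p$ is a function of $(S_p, S - S_p)$, and $(\tilde D_1, \ldots, \tilde D_p)$ is a function of $(\Gamma_{a_1}, \ldots, \Gamma_{a_p})$. We have already noted that $(\tilde D_1, \ldots, \tilde D_p)$ is independent of $S_p$, and $(\Gamma_{a_1}, \ldots, \Gamma_{a_p})$ is independent of $S - S_p$ by the independence of the original gammas. Hence $(\tilde D_1, \ldots, \tilde D_p)$ is independent of the pair $(S_p, S - S_p)$, and therefore independent of $B_p$. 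This gives the decomposition
\[
(D_1, \ldots, D_p) = B_p \cdot (\tilde D_1, \ldots, \tilde D_p)
\]
with the stated independent factors, concluding the proof. There is no serious obstacle: the only point requiring care is the joint-independence argument in the last step, but it follows cleanly once everything is expressed in terms of the underlying independent gammas.
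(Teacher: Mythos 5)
Your proof is correct and is exactly the argument the paper intends: the authors simply state that the lemma ``follows straightforwardly from (\ref{eqn:betagamma})'', i.e.\ from the gamma representation of the Dirichlet distribution, which is the route you take. Your independence step is sound because $(\tilde D_1,\ldots,\tilde D_p, S_p)$ is a function of the first $p$ gammas and hence jointly independent of $S-S_p$, which together with $\tilde D \perp S_p$ gives the mutual independence you need.
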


The proofs of the two following lemmas are very similar. We only develop the first one.

\begin{lem}
\label{lem:recursion} Consider two integers $p,k \geq 1$ such that $p+1 \leq k \leq 2p-1$ and a strictly positive sequence $(a_1,\ldots, a_{k-p})$ such that
$$\sum_{i=1}^{k-p} a_i=\frac{(p+1)\alpha -2}{\alpha-1}-k.$$
Consider independent random variables $B \sim \mathrm{Beta}\big(1,\frac{2-\alpha}{\alpha-1}\big)$, $B_p \sim  \mathrm{Beta}\left(\frac{(p+1)\alpha-2}{\alpha-1},\frac{1}{\alpha-1}\right)$ and
$$(D_1,\ldots, D_k, D_{k+1}, \ldots, D_{2k-p}) \sim \mathrm{Dir}\big(\underbrace{1, \ldots, 1}_{k}, a_1,\ldots, a_{k-p}\big).$$
Then, for any $1 \leq i^{*} \leq k-p$, the random vector
\[
B_p \cdot \Bigg(D_1, \ldots,  D_k, \tfrac{\left(1- B_p\right) B}{B_p},   D_{k+1}, \ldots,   D_{k+i^{*}-1},   D_{k+i^{*}}+ \tfrac{\left(1-B_p\right)(1-B)}{B_p}, D_{k+i^{*}+1},\ldots,  D_{2k-p}\Bigg) 
\]
is distributed as
$$\mathrm{Dir}\bigg(\underbrace{1, \ldots, 1}_{k+1},a_1,\ldots, a_{i^{*}-1}, a_{i^{*}}+ \frac{2-\alpha}{\alpha-1}, a_{i^{*}+1}, \ldots, a_{k-p}\bigg).
$$
\end{lem}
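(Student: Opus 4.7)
The plan is to realize every Beta and Dirichlet in the statement through the Gamma construction (\ref{eqn:betagamma}), after which the lemma collapses to the routine fact that a sum of independent Gamma variables is again Gamma with shape parameters added. First, I would introduce independent Gammas $G_1, \ldots, G_k \sim \mathrm{Gamma}(1)$, $\tilde G_i \sim \mathrm{Gamma}(a_i)$ for $1 \le i \le k-p$, together with two further independent variables $U \sim \mathrm{Gamma}(1)$ and $V \sim \mathrm{Gamma}(\tfrac{2-\alpha}{\alpha-1})$, and set $S := \sum_{j=1}^{k} G_j + \sum_{i=1}^{k-p} \tilde G_i$ and $H := U + V$. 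The hypothesis $\sum_{i=1}^{k-p} a_i = \frac{(p+1)\alpha-2}{\alpha-1} - k$ is precisely what makes $S \sim \mathrm{Gamma}(\tfrac{(p+1)\alpha-2}{\alpha-1})$, and it is immediate from $1 + \tfrac{2-\alpha}{\alpha-1} = \tfrac{1}{\alpha-1}$ that $H \sim \mathrm{Gamma}(\tfrac{1}{\alpha-1})$.

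Applying (\ref{eqn:betagamma}) three times supplies the required coupling. The normalized vector $\tfrac{1}{S}(G_1, \ldots, G_k, \tilde G_1, \ldots, \tilde G_{k-p})$ is distributed as $\mathrm{Dir}(1, \ldots, 1, a_1, \ldots, a_{k-p})$ and is independent of $S$; the ratio $B := U/H \sim \mathrm{Beta}(1, \tfrac{2-\alpha}{\alpha-1})$ is independent of $H$; and $B_p := S/(S+H) \sim \mathrm{Beta}(\tfrac{(p+1)\alpha-2}{\alpha-1}, \tfrac{1}{\alpha-1})$ is independent of both $B$ and the Dirichlet vector. Thus without loss of generality the triple appearing in the lemma can be taken to be these three objects.

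Next, a direct calculation gives $(1-B_p)B = U/(S+H)$ and $(1-B_p)(1-B) = V/(S+H)$. Multiplying the vector displayed in the lemma by $B_p$ and substituting these identities reduces it to
$$
\frac{1}{S+H}\bigl(G_1, \ldots, G_k,\; U,\; \tilde G_1, \ldots, \tilde G_{i^*-1},\; \tilde G_{i^*} + V,\; \tilde G_{i^*+1}, \ldots, \tilde G_{k-p}\bigr).
$$
Since $\tilde G_{i^*} + V \sim \mathrm{Gamma}(a_{i^*} + \tfrac{2-\alpha}{\alpha-1})$, independent of the remaining $G_j$'s, $U$, and of the $\tilde G_i$ with $i \neq i^*$, a final appeal to (\ref{eqn:betagamma}) identifies the vector above with the announced Dirichlet distribution.

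No step looks genuinely delicate; the only real work is parameter bookkeeping. As a sanity check I would verify beforehand that $(k+1) + \bigl(\sum_i a_i\bigr) + \tfrac{2-\alpha}{\alpha-1} = \tfrac{(p+1)\alpha-1}{\alpha-1}$, which is simultaneously the shape parameter of $S+H$ and the total mass of the target Dirichlet. Once this compatibility is in place, each displayed identity is a one-line manipulation of Gamma variables, and the argument will also transpose directly to the companion lemma that follows.
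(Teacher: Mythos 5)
Your proposal is correct and is essentially identical to the paper's proof: both realize $B$, $B_p$ and the Dirichlet vector through a common family of independent Gamma variables (your $U$ and $V$ are the paper's $\Gamma_1^{(0)}$ and $\Gamma_{\frac{2-\alpha}{\alpha-1}}$), observe that $B_p D_j$, $(1-B_p)B$ and $(1-B_p)(1-B)$ become the individual Gammas divided by the grand total, and conclude by additivity of Gamma shape parameters. The parameter bookkeeping you flag is exactly the check the hypothesis on $\sum_i a_i$ is designed to make work, so nothing is missing.
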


\begin{proof} We use the construction of Dirichlet and Beta random variables via independent Gamma random variables. Let $\Gamma^{(i)}_1, 0 \leq i \leq k$, $\Gamma_{a_j}, 1 \leq j \leq k-p$ and $\Gamma_{\frac{2-\alpha}{\alpha-1}}$ be independent Gamma random variables and note that, without loss of generality, we may assume that
$$
B=\frac{\Gamma_1^{(0)}}{\Gamma_1^{(0)}+\Gamma_{\frac{2-\alpha}{\alpha-1}}}, \quad \quad  B_p=\frac{\sum_{i=1}^k \Gamma^{(i)}_1+\sum_{i=1}^{k-p} \Gamma_{a_i}}{\sum_{i=1}^k \Gamma^{(i)}_1+\sum_{i=1}^{k-p} \Gamma_{a_i}+\Gamma_1^{(0)}+\Gamma_{\frac{2-\alpha}{\alpha-1}}},$$ and $$D_j=\frac{\Gamma^{(j)}_1}{\sum_{i=1}^k \Gamma^{(i)}_1+\sum_{i=1}^{k-p} \Gamma_{a_i}}, \ 1\leq j \leq k, \quad \quad D_j=\frac{\Gamma_{a_{j-k}}}{\sum_{i=1}^k \Gamma^{(i)}_1+\sum_{i=1}^{k-p} \Gamma_{a_i}}, \ k+1\leq j \leq 2k-p.
$$
Let $\Gamma_{\mathrm{total}}=\sum_{i=1}^k \Gamma^{(i)}_1+\sum_{i=1}^{k-p} \Gamma_{a_i}+\Gamma_1^{(0)}+\Gamma_{\frac{2-\alpha}{\alpha-1}}$. This entails that
$$
B_p\cdot D_j = \frac{\Gamma_1^{(j)}}{\Gamma_{\mathrm{total}}}, \ 1 \leq j \leq k, \quad \quad B_p\cdot D_j =\frac{\Gamma_{a_{j-k}}}{\Gamma_{\mathrm{total}}}, \ k+1\leq j \leq 2k-p$$ 
and that
$$(1-B_p)\cdot B=\frac{\Gamma_1^{(0)}}{\Gamma_{\mathrm{total}}}, \quad \quad B_p\cdot D_{k+i^{*}}+(1-B_p)\cdot (1-B)=\frac{\Gamma_{a_{i^{*}}}+\Gamma_{\frac{2-\alpha}{\alpha-1}}}{\Gamma_{\mathrm{total}}}.$$
The result follows.
\end{proof}

The next result is proved similarly.

\begin{lem}
\label{lem:recursion2}
Consider two integers $p,k \geq 1$ such that $p \leq k \leq 2p-1$ and a strictly positive sequence $(a_1,\ldots, a_{k-p})$ such that
$$\sum_{i=1}^{k-p} a_i=\frac{p\alpha -1}{\alpha-1}-k,$$
with the convention that this sequence is empty if $k=p$.
Consider independent random variables $B \sim \mathrm{Beta}\left(1,\frac{2-\alpha}{\alpha-1}\right)$,  $B_p \sim \mathrm{Beta}\left(\frac{(p+1)\alpha-2}{\alpha-1},\frac{1}{\alpha-1}\right)$, $U\sim \mathrm U(0,1)$ and
$$(D_1,D_2,\ldots,D_k, D_{k+1}, \ldots, D_{2k-p}) \sim \mathrm{Dir}\big(2,1,\ldots,1, a_1,\ldots, a_{k-p}\big).$$
Then
$$
B_p \cdot \left(D_1  U, D_1  (1-U), D_2,  \ldots,   D_{k},  \frac{(1-B_p) B}{B_p}, D_{k+1}, \ldots, D_{2k-p}, \frac{(1-B_p)(1-B)}{B_p} \right)
$$ 
is distributed as
$$
\mathrm{Dir}\bigg(\underbrace{1, \ldots, 1}_{k+2}, a_1, \ldots, a_{k-p}, \frac{2-\alpha}{\alpha-1} \bigg).
$$
\end{lem}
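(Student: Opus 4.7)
The proof will closely follow the template of Lemma~\ref{lem:recursion}: realize every Beta and Dirichlet random variable as a ratio of independent Gamma variables, then recognize the transformed vector as the corresponding ratios for a different collection of Gammas with the claimed parameters. The one genuinely new ingredient is handling the ``$2$'' at the first Dirichlet coordinate, which has to be split into two ``$1$''s in the output.

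The plan is as follows. Introduce independent Gamma random variables $\Gamma_1^{(0)}, \Gamma_1^{(1)}, \Gamma_1^{(2)}, \Gamma_1^{(3)}, \ldots, \Gamma_1^{(k)}$ of parameter $1$, a $\Gamma_{a_j}$ for each $j \in \{1, \ldots, k-p\}$, and a $\Gamma_{(2-\alpha)/(\alpha-1)}$. Set
\[
\Gamma_{\mathrm{tot}} := \Gamma_1^{(0)} + \Gamma_1^{(1)} + \Gamma_1^{(2)} + \sum_{i=3}^{k} \Gamma_1^{(i)} + \sum_{j=1}^{k-p} \Gamma_{a_j} + \Gamma_{(2-\alpha)/(\alpha-1)}, \qquad S := \Gamma_{\mathrm{tot}} - \Gamma_1^{(0)} - \Gamma_{(2-\alpha)/(\alpha-1)}.
\]
By the standard Beta/Gamma calculus, we may take
\[
B_p = \frac{S}{\Gamma_{\mathrm{tot}}}, \qquad B = \frac{\Gamma_1^{(0)}}{\Gamma_1^{(0)} + \Gamma_{(2-\alpha)/(\alpha-1)}},
\]
together with the Dirichlet vector realized by setting $D_1 = (\Gamma_1^{(1)} + \Gamma_1^{(2)})/S$ (so that $D_1 \cdot S \sim \mathrm{Gamma}(2)$ as required for the first coordinate), $D_i = \Gamma_1^{(i)}/S$ for $2 \le i \le k$, and $D_{k+j} = \Gamma_{a_j}/S$ for $1 \le j \le k-p$. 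The crucial extra step is the splitting: set $U := \Gamma_1^{(1)}/(\Gamma_1^{(1)} + \Gamma_1^{(2)})$, which by the beta--gamma relation is uniform on $(0,1)$, independent of $\Gamma_1^{(1)} + \Gamma_1^{(2)}$ and hence of everything else needed; then $D_1 U = \Gamma_1^{(1)}/S$ and $D_1(1-U) = \Gamma_1^{(2)}/S$.

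With these representations in place, each of the $k+2$ entries of the output vector becomes, after multiplication by $B_p = S/\Gamma_{\mathrm{tot}}$ (or, for the two ``leftover'' entries $(1-B_p)B/B_p$ and $(1-B_p)(1-B)/B_p$, after the obvious simplification), a Gamma random variable divided by $\Gamma_{\mathrm{tot}}$. Explicitly, the $k$ coordinates associated with the ``split $D_1$'' and the $D_2, \ldots, D_k$ each give $\Gamma_1^{(i)}/\Gamma_{\mathrm{tot}}$; the $k-p$ coordinates $B_p D_{k+j}$ give $\Gamma_{a_j}/\Gamma_{\mathrm{tot}}$; the entry $(1-B_p)B/B_p \cdot B_p = (1-B_p)B$ equals $\Gamma_1^{(0)}/\Gamma_{\mathrm{tot}}$; and similarly $(1-B_p)(1-B) = \Gamma_{(2-\alpha)/(\alpha-1)}/\Gamma_{\mathrm{tot}}$.

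The standard construction (\ref{eqn:betagamma}) of the Dirichlet then identifies the joint law of the output as $\mathrm{Dir}(\underbrace{1,\ldots,1}_{k+2}, a_1, \ldots, a_{k-p}, \tfrac{2-\alpha}{\alpha-1})$, as required. The only potential pitfall is checking that the total parameter of $S$ matches the first parameter of $B_p$ and that the parameters of the added Gammas match the second parameter of $B_p$ and both parameters of $B$, so that the representation of $B_p$ and $B$ above is legitimate; but $S \sim \mathrm{Gamma}(\tfrac{(p+1)\alpha-2}{\alpha-1})$ by direct summation using $\sum_j a_j = \tfrac{p\alpha-1}{\alpha-1}-k$, and $\Gamma_1^{(0)} + \Gamma_{(2-\alpha)/(\alpha-1)} \sim \mathrm{Gamma}(\tfrac{1}{\alpha-1})$, so everything matches. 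The proof is a routine extension of that of Lemma~\ref{lem:recursion}, with the splitting $\Gamma_2 = \Gamma_1^{(1)} + \Gamma_1^{(2)}$ via the uniform variable $U$ being the only new idea.
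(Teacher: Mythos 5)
Your approach is exactly the one the paper intends: the paper gives no separate proof of this lemma, stating only that it "is proved similarly" to Lemma~\ref{lem:recursion}, i.e.\ via the Gamma representation of all the Beta and Dirichlet variables, and your one new ingredient --- realizing the parameter-$2$ coordinate as $\Gamma_1^{(1)}+\Gamma_1^{(2)}$ and splitting it with $U=\Gamma_1^{(1)}/(\Gamma_1^{(1)}+\Gamma_1^{(2)})$, which is uniform and independent of the sum --- is precisely what is needed. There is, however, an off-by-one in your bookkeeping: the output Dirichlet has $k+2$ unit parameters, so you need $k+2$ independent $\mathrm{Gamma}(1)$ variables (one for $B$, two for $D_1$, and $k-1$ for $D_2,\dots,D_k$), whereas you only introduce $\Gamma_1^{(0)},\dots,\Gamma_1^{(k)}$; as written you assign $\Gamma_1^{(2)}$ both to $D_1$ and to $D_2$, so the coordinates no longer sum to $1$ and $S$ actually has parameter $\frac{p\alpha-1}{\alpha-1}$ rather than the claimed $\frac{(p+1)\alpha-2}{\alpha-1}$. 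The fix is mechanical: index the unit Gammas up to $k+1$ and set $D_i=\Gamma_1^{(i+1)}/S$ for $2\le i\le k$; with that correction every parameter check you perform goes through and the argument is complete.
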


\section{Edge-lengths, weights and shapes} \label{sec:proofs}

The role of this section is to prove Theorem \ref{thm:main} for $1<\alpha<2$. In order to do this, we will first compute the joint distribution of the sequences of lengths and weights appearing in version (\textsc{II}) of the algorithm. In particular, we will use this to check that the two versions of the algorithm are equivalent (Section \ref{sec:lw}). It will also entail that the distributions of the sequences of edge-lengths given the shape of the tree are the same for $\mathcal T_p$ and $\mathcal T_{\alpha,p}$ (Section \ref{sec:l}). Then we will check that the sequence of shapes of $\mathcal T_p,p\geq 1$ and of $\mathcal T_{\alpha,p},p \geq 1$ are also identically distributed (Section \ref{sec:s}), which will lead us to the identity in distribution of  $(\mathcal T_p,p\geq 1)$ and $(\mathcal T_{\alpha,p},p \geq 1)$ (Section \ref{sec:proofth}). 

We begin by recalling a result of Duquesne and Le Gall \cite{DuquesneLeGall} on the marginal distribution of the tree $\mathcal{T}_{\alpha,p}$.  To that end, write $L^{(\alpha,p)}_e$ for the length of the edge $e \in E(T_{\alpha,p})$, where $E(T_{\alpha,p})$ denotes the set of edges of $T_{\alpha,p}$. For every vertex $v$ of $T_{\alpha,p}$,  write $d_v$ for its degree.  We reformulate part of Theorem 3.3.3 of \cite{DuquesneLeGall} (adjusted to take into account the fact that our $\alpha$-stable tree is unordered and a factor $\alpha$ bigger than theirs). For convenience, we will sometimes use the notation $v \in \mathrm t$ to denote a vertex $v$ of a discrete tree $\mathrm t$, thereby identifying $\mathrm t$ and its vertex-set.

\begin{thm}[Duquesne and Le Gall] \label{thm:DuLG}
Let $\alpha \in (1,2)$ and $p \ge 1$.  Consider the law of $\mathcal{T}_{\alpha,p}$.  If $\mathrm t$ is a rooted discrete tree with $p$ labelled leaves then
\begin{equation} \label{eqn:comb}
\Prob{T_{\alpha,p} = \mathrm t} = \frac{ \prod_{v \in \mathrm t: d_v \ge 3} (\alpha-1)(2-\alpha) \ldots (d_v - 1 - \alpha)}{(\alpha-1) (2\alpha-1)\ldots ((p-1)\alpha-1)}.
\end{equation}
Now fix $\mathrm t$ and let $e_i(\mathrm t), 1 \leq i \leq |\mathrm t|$ denote the sequence of its edges labelled arbitrarily. Conditionally on $T_{\alpha,p} =\mathrm t$, the edge-lengths $\big(L^{(\alpha,p)}_{e_i(\mathrm t)}, 1 \leq i \leq |\mathrm t|\big)$ have joint density
\[
\frac{\Gamma(p - 1/\alpha)}{\Gamma(p - (1-1/\alpha)|\mathrm t| - 1/\alpha)} \int_0^1 u^{p - 1- (1-1/\alpha)|\mathrm t| - 1/\alpha} q_{1-1/\alpha}\left( \sum_{i=1}^{|\mathrm t|} \ell_{e_i(\mathrm t)}, 1 - u \right) \mathrm d u,
\]
where $q_{1-1/\alpha}(s,\cdot)$ is the density of $s^{\alpha/(\alpha-1)}\sigma_{1-1/\alpha}$, as defined in (\ref{stablerv}). 
\end{thm}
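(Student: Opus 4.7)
Since $\mathcal T_\alpha$ is the $\R$-tree coded by the normalized excursion $(H_t)_{0\le t\le 1}$ of the $\alpha$-stable height process and $\mu_\alpha$ is the pushforward of Lebesgue measure on $[0,1]$, sampling i.i.d.\ leaves $X_1,\ldots,X_p$ is equivalent to sampling i.i.d.\ uniform times $U_1,\ldots,U_p$ in $[0,1]$. Order them as $U_{(1)}<\cdots<U_{(p)}$ and set $\check H_{i,j}:=\inf_{[U_{(i)},U_{(j)}]} H$. The reduced tree $\mathcal T_{\alpha,p}$ is fully determined by the leaf heights $H_{U_{(i)}}$ and the $\check H_{i,j}$ via $d(U_{(i)},U_{(j)})=H_{U_{(i)}}+H_{U_{(j)}}-2\check H_{i,j}$; its branchpoint heights are the record values of the $\check H_{i,j}$ in the natural order, and its shape is encoded by the order in which these records are achieved. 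My approach would be to apply a $p$-fold Bismut-type decomposition of the excursion at the times $U_{(1)},\ldots,U_{(p)}$, working under the It\^o excursion measure $\mathbf N$ of the stable height process and then disintegrating by the excursion length (whose law under $\mathbf N$ has a density proportional to $g_{1-1/\alpha}$). This produces an explicit joint law for the heights, the infima, and the residual mass sitting outside the spanned subtree, expressed in terms of the one-sided stable density and combinatorial weights coming from the Poisson structure of the height process.

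For part (i), the shape probability is obtained by integrating the continuous variables out of this joint law. Each branchpoint $v \in \mathrm t$ of degree $d_v \ge 3$ corresponds, under the Poisson representation of the excursion, to a cluster of $d_v-2$ simultaneous children at a record infimum, weighted against the stable L\'evy measure $\Pi(\mathrm d r)=c_\alpha r^{-1-\alpha}\mathrm d r$ of the associated subordinator. A direct computation of this $(d_v-2)$-fold cluster weight against $\Pi$ produces the factor $(\alpha-1)(2-\alpha)\cdots(d_v-1-\alpha)$, and the universal denominator $(\alpha-1)(2\alpha-1)\cdots((p-1)\alpha-1)$ arises as the normalization that converts a size-biased pick of $p$ clusters under $\mathbf N$ into a uniform sample under the conditional law given excursion length $1$.

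For part (ii), once the shape is fixed, each of the $|\mathrm t|$ edges of $\mathcal T_{\alpha,p}$ is identified with a specific difference of the form $H_{U_{(i)}}-\check H_{i,j}$ or $\check H_{i,j}-\check H_{i',j'}$ depending on its position. A linear change of variables from $(H_{U_{(i)}},\check H_{i,j})$ to the edge-lengths, followed by integrating out the residual mass against the stable density, produces an integral of precisely the form claimed, with $u$ representing the proportion of the unit excursion length absorbed by the spanned subtree and $q_{1-1/\alpha}(\sum_i \ell_{e_i(\mathrm t)},1-u)$ capturing the stable mass outside it. The main obstacle is controlling the combinatorics of the multi-point Bismut decomposition: identifying which sub-excursions correspond to which branches, and then organizing the computation so that the combinatorial factor (the shape probability) cleanly separates from the continuous factor (the edge-length density). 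The cleanest route is probably induction on $p$, grafting one leaf at a time and using the strong Markov property of the height process at the ladder infima, while carefully tracking the Gamma-function prefactors produced at each step so that the final expression reduces to the form stated.
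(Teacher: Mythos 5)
This statement is not proved in the paper at all: it is imported verbatim (up to the unordering of the tree and the factor $\alpha$ in the normalization) from Theorem 3.3.3 of Duquesne and Le Gall's monograph, and the paper explicitly says so just before stating it. So there is no ``paper proof'' to compare against; the authors use the result as an external input to Propositions~\ref{prop:lengths} and~\ref{prop:Marchal}. Your sketch is, in spirit, a reconstruction of the route taken in the original reference: work under the It\^o excursion measure of the stable height process, perform a multi-point spinal (Bismut-type) decomposition at the sampled times, read off the reduced tree from the leaf heights and running infima, and only at the end disintegrate over the excursion length to pass to the normalized excursion. That is the right framework, and the identification of the denominator $(\alpha-1)(2\alpha-1)\cdots((p-1)\alpha-1)$ as a normalization and of the branch-point factors as moments of cluster sizes against the stable L\'evy measure is also correct in outline.

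As a proof, however, what you have written is a plan with the decisive steps missing, and you acknowledge as much. Three computations carry all the content and none is carried out: (a) the exact multi-point Palm formula under $\mathbf N$ (the combinatorial bookkeeping of which sub-excursions attach to which branch is precisely where the shape probability is generated, and the claim that the combinatorial factor ``cleanly separates'' from the length density is the theorem, not an observation); (b) the cluster-weight computation at a branch point --- one must show that size-biasing $d_v-1$ subtrees out of a jump of the associated subordinator and integrating against $\Pi(\d r)\propto r^{-1-\alpha}\d r$ yields exactly $(\alpha-1)(2-\alpha)\cdots(d_v-1-\alpha)$ after the correct normalization, which is a Gamma-function identity that needs to be written down; and (c) the disintegration over total length producing the specific kernel $u^{p-1-(1-1/\alpha)|\mathrm t|-1/\alpha}\,q_{1-1/\alpha}(\sum_i\ell_{e_i(\mathrm t)},1-u)$, where the exponent of $u$ encodes both $p$ and $|\mathrm t|$ and is exactly the quantity one could get wrong. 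The proposed fallback of inducting on $p$ by grafting one leaf at a time is viable (it is close in spirit to how the present paper actually exploits the theorem, via Marchal's recursion), but it would still require the $p=1$ spinal case and the one-step transition as inputs. In short: correct strategy, but the proof is not there yet; if the intent is to use the result rather than reprove it, citing Duquesne--Le Gall as the paper does is the appropriate move.
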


Note that the distribution of the lengths, conditional on $T_{\alpha,p} =\mathrm t$, does not depend on the labelling of the edges of $\mathrm t$ i.e.\ the lengths are exchangeable. For this reason, we will simply write
$$
\big(L^{(\alpha,p)}_e, e \in E(T_{\alpha,p}) \big)
$$
to denote this sequence.

\subsection{Edge-lengths and weights of $\mathcal T_p$}
\label{sec:lw}

For the moment, we focus our attention on the sequence of trees $(\mathcal T_p, p\geq 1)$ built according to version (\textsc{II}) of our algorithm. Our goal is to find the joint distribution of the edge-lengths in the tree $\mathcal T_p$ and the weights on its internal vertices $\{W_{v}^{(p)},v \in \mathcal I_p\}$, given its shape $T_p$.

To do this, we must choose canonical labellings. We start with the edge-lengths, and proceed recursively as follows.
Firstly, we set $L^{(1)}_1=M_1$.
Then, given the vector $$L^{(p)}=\left(L_1^{(p)},\ldots,L_{|T_p|}^{(p)}\right)$$ of edge-lengths of $\mathcal T_p$, denote the length of the new edge added at step $p+1$ by $L^{(p+1)}_{|T_{p+1}|}$.
\begin{itemize}
\item If the new edge is added at internal vertex, note that $|T_{p+1}|=|T_{p}|+1$.  Set $L^{(p+1)}_j=L^{(p)}_j$ for $1 \leq i \leq |T_p|$.  
\item If the new edge is added to a point along an edge, we get $|T_{p+1}|=|T_{p}|+2$. If the edge chosen has length $L^{(p)}_i$, then this edge is split into two whose lengths are labelled $L^{(p+1)}_i$ and $L^{(p+1)}_{|T_p|+1}$ (with the rule that $L^{(p+1)}_i$ denotes the length of the edge closest to the root). The lengths of the other edges are unchanged, in the sense that $L^{(p+1)}_j=L^{(p)}_j$ for $1 \leq j \leq |T_p|, j\neq i$.
\end{itemize} 
By iterating $p$, this defines the vector $L^{(p)}$ (of length $|T_p|$) of edge-lengths of $\mathcal T_p$, for all $p\geq 1$. Of course, we get $L_p=\sum_{i=1}^{|T_p|} L_i^{(p)}$. We choose to put the elements of $\{W_{v}^{(p)},v \in \mathcal I_p\}$ in order of appearance of the internal vertices in the construction of $\mathcal T_p$. Let $W^{(p)}$ denote this ordered sequence of length $|T_p|-p$.  We denote by $d_1, \ldots, d_{|T_p|-p}$ the degrees of these internal vertices with the same ordering. The following result is the key point in our construction.

\begin{prop} \label{prop:constru2}
For $p \ge 2$, conditionally on the shapes $T_1,\ldots,T_p$, we have 
$$\Big(L^{(p)},W^{(p)}\Big)=M_p \cdot \left( Z^{(p)}_1, Z^{(p)}_2,\ldots, Z^{(p)}_{2|T_p|-p} \right),$$
where $M_p \sim \mathrm{ML}(1-1/\alpha, p-1/\alpha)$,
\[
Z^{(p)}:=\left(Z^{(p)}_1, Z^{(p)}_2, \ldots, Z^{(p)}_{2|T_p|-p}\right)
\sim \mathrm{Dir}\bigg(\underbrace{1, \ldots, 1}_{|T_p|}, \frac{d_1 - 1 - \alpha}{\alpha-1}, \ldots, \frac{d_{|T_p|-p} - 1 - \alpha}{\alpha - 1}\bigg) 
\]
and 
$Z^{(p)}$ is independent of $(M_p,M_{p+1},\ldots)$.
\end{prop}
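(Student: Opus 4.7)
The plan is to proceed by induction on $p \ge 2$, with the base case $p=2$ verified directly: since $L_1 = M_1$ forces the step-$2$ selection onto the single edge of $\mathcal T_1$, the tree $\mathcal T_2$ is a Y-shape, and, writing $M_1 = \beta_1 M_2$ (Lemma~\ref{lem:alternativedefs}(i)), the three edge-lengths and the single internal weight, divided by $M_2$, equal $(U\beta_1, (1-U)\beta_1, (1-\beta_1)B, (1-\beta_1)(1-B))$ for independent $U \sim U(0,1)$, $B \sim \mathrm{Beta}(1, (2-\alpha)/(\alpha-1))$, $\beta_1 \sim \mathrm{Beta}(2, 1/(\alpha-1))$. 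The Gamma construction of Section~\ref{sec:DirD} identifies this as a $\mathrm{Dir}(1,1,1,(2-\alpha)/(\alpha-1))$ vector, independent of $M_2$, and matches the $\mathrm{Dir}$ parameter $(2-\alpha)/(\alpha-1) = (3-1-\alpha)/(\alpha-1)$ to the degree-$3$ internal vertex.

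For the inductive step, assume the statement at level $p$; then $(L^{(p)}, W^{(p)}) = M_p \cdot Z^{(p)}$ with $Z^{(p)} \sim \mathrm{Dir}$ of the claimed parameters ($|T_p|$ ones followed by $a_j := (d_j - 1 - \alpha)/(\alpha - 1)$), independent of $(M_p, M_{p+1}, \ldots)$. The key observation is that the algorithm's selection rule is a size-biased pick from $Z^{(p)}$: edge $i$ is selected with probability $(L_p/M_p)(L^{(p)}_i/L_p) = Z^{(p)}_i$, and the internal vertex at position $|T_p| + j$ with probability $Z^{(p)}_{|T_p|+j} = W^{(p)}_v/M_p$. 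By Lemma~\ref{lem:biasDir}, conditionally on the picked index being $\ell$, the parameter of $Z^{(p)}$ at position $\ell$ is bumped by $1$. The step then brings in the fresh independent variables $\beta_p$ (with $M_p = \beta_p M_{p+1}$ by Lemma~\ref{lem:alternativedefs}(i)), $B \sim \mathrm{Beta}(1, (2-\alpha)/(\alpha-1))$, and, if an edge was picked, a uniform split $U \sim U(0,1)$. I now treat the two cases. In the edge case, the bumped distribution has a $2$ at position $i$ and, by Remark~\ref{rem:somme}, its $a$-sum $(p\alpha-1)/(\alpha-1)-|T_p|$ matches the hypothesis of Lemma~\ref{lem:recursion2}; applying that lemma (with $B_p = \beta_p$, after relabelling position $i$ to position $1$, which is legitimate because the $|T_p|$ ones are exchangeable prior to the size-biased pick) delivers the required $\mathrm{Dir}$ distribution for $(L^{(p+1)}, W^{(p+1)})/M_{p+1}$, with the new final parameter $(2-\alpha)/(\alpha-1)$ corresponding to the freshly created degree-$3$ vertex. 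In the vertex case, the bumped distribution has $a_j + 1$ at position $|T_p|+j$, the resulting $a$-sum is $((p+1)\alpha-2)/(\alpha-1) - |T_p|$ (matching Lemma~\ref{lem:recursion}), and that lemma adds a further $(2-\alpha)/(\alpha-1)$ to position $|T_p|+j$, producing the final parameter $a_j + 1 + (2-\alpha)/(\alpha-1) = (d_j - \alpha)/(\alpha-1) = ((d_j+1) - 1 - \alpha)/(\alpha-1)$, precisely matching the incremented degree $d_j + 1$ in $T_{p+1}$.

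Independence of $Z^{(p+1)}$ from $(M_{p+1}, M_{p+2}, \ldots)$ is preserved, as $Z^{(p+1)}$ is a deterministic function of $Z^{(p)}$, $\beta_p$, $B$, $U$ and the size-biased index, with $\beta_p$ independent of $M_{p+1}$ (hence of the entire tail $(M_q)_{q \ge p+1}$, since $M_q = M_{p+1}/(\beta_{p+1} \cdots \beta_{q-1})$) by Lemma~\ref{lem:alternativedefs}(i), and the other inputs independent of the tail by the inductive hypothesis or construction. The main difficulty I anticipate is the notational bookkeeping required to reconcile the recursive canonical labelling of edges and weights in Proposition~\ref{prop:constru2} with the fixed positional statements of Lemmas~\ref{lem:recursion} and~\ref{lem:recursion2}; this is handled by the exchangeability of the $|T_p|$ Dirichlet coordinates having parameter $1$ before the size-biased step, after which matching the updated Dirichlet parameters with the degrees in $T_{p+1}$ is a routine algebraic check via Remark~\ref{rem:somme}.
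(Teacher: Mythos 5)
Your proof is correct and follows essentially the same route as the paper's: induction on $p$, with the base case identified via the Gamma representation of the Dirichlet, and the inductive step carried out by recognizing the selection rule as a size-biased pick (Lemma~\ref{lem:biasDir}) followed by an application of Lemma~\ref{lem:recursion} or Lemma~\ref{lem:recursion2} in the vertex and edge cases respectively, with independence from the tail $(M_{p+1},M_{p+2},\ldots)$ preserved because all fresh inputs are independent of it. The only cosmetic difference is that you verify the parameter-sum hypotheses of those lemmas explicitly via Remark~\ref{rem:somme}, which the paper leaves implicit.
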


Note that the Dirichlet distribution depends on $(T_1,\ldots,T_p)$ only through $T_p$. In particular the distribution of the rescaled vector of weights $W^{(p)}/M_p$ only depends on the degrees of the vertices of $T_p$.

\begin{proof}
The fact that $M_p$ still has a $\mathrm{ML}(1-1/\alpha, p-1/\alpha)$ distribution after conditioning on $T_1,\ldots, T_p$ is an immediate consequence of Remark \ref{rem:independence}.

For the rest of the proof, we proceed by induction on $p \geq 2$.  For $p=2$, we have $|T_p|=3$ and, independently of the Markov chain $M$, independent random variables $U\sim \mathrm{U}(0,1)$ and $B \sim \mathrm{Beta}(1, (2-\alpha)/(1-\alpha))$ such that $L_1^{(2)}=M_1  U$, $L_2^{(2)}=M_1 (1-U)$, $L_3^{(2)}=(M_2-M_1) B$ and $W_1^{(2)}=(M_2-M_1)(1-B)$. Then, 
$$
\big(L_1^{(2)}, L_2^{(2)}, L_3^{(2)}, W_1^{(2)} \big)= M_2 \cdot \big(\beta_1  U, \ \beta_1  (1-U), \ (1-\beta_1)  B, \ (1-\beta_1)  (1-B)\big),
$$
where the vector in parentheses on the right-hand side is independent of $(M_2,M_3,\ldots)$.  By Lemma \ref{lem:recursion2} (for $k=p=1$), this vector is distributed as $\mathrm{Dir}(1,1,1, (2-\alpha)/(\alpha-1))$. Hence the distribution of $(L_1^{(2)}, L_3^{(2)}, L_3^{(2)}, W_1^{(2)})$ is as claimed (conditionally on the shapes of $T_1$, $T_2$, since these are deterministic). 

Now suppose that the statement of the proposition holds for some integer $p\geq 2$. 
Then, conditionally on $T_1,\ldots,T_{p+1}$, there are two cases, depending on whether we obtain $T_{p+1}$ from $T_p$ by adding the new edge at a vertex or an edge.  We note that additionally conditioning on $T_{p+1}$ will change the distribution of $Z^{(p)}$, but not the fact that it is independent of $(M_p,M_{p+1},\ldots)$, by Remark \ref{rem:independence}. 

Suppose first that the new edge has been added at an internal vertex (in which case, $|T_{p+1}|=|T_p|+1$), say with degree $d_{i^*}$. By Lemma~\ref{lem:biasDir}, this has the effect of increasing the parameter corresponding to the vertex in the Dirichlet by 1. So, conditionally on this additional event, the distribution of $Z^{(p)}$ is now $$\mathrm{Dir}\bigg(\underbrace{1, \ldots, 1}_{|T_p|},\tfrac{d_1 - 1 - \alpha}{\alpha-1}, \ldots,  \tfrac{d_{i^*-1} - 1 - \alpha}{\alpha-1}, \tfrac{d_{i^*} - 1 - \alpha}{\alpha-1} +1, \tfrac{d_{i^*+1} - 1 - \alpha}{\alpha-1}, \ldots, \tfrac{d_{|T_p|-p} - 1 - \alpha}{\alpha - 1}\bigg).$$
 
Moreover, by definition, 
\begin{align*}
&\Big(L_1^{(p+1)},\ldots,L^{(p+1)}_{|T_{p+1}|},W^{(p+1)}_1,\ldots, W^{(p+1)}_{|T_{p+1}|-{p+1}} \Big) \\
&\qquad \qquad =\Big(M_p  Z_1^{(p)},  \ldots,  M_p  Z_{|T_p|}^{(p)},  (M_{p+1}-M_p)  B, M_p  Z_{|T_p|+1}^{(p)}, \ldots,  M_p  Z_{|T_p|+i^*-1}^{(p)}, \\
& \ \qquad \qquad \qquad   M_p  Z_{|T_p|+i^*}^{(p)}+(M_{p+1}-M_p)(1-B), M_p  Z_{|T_p|+i^*+1}^{(p)},  \ldots,, M_p  Z_{2|T_p|-p}^{(p)}  \Big) \\
& \qquad \qquad = M_{p+1} \cdot \beta_p \cdot \Big( Z_1^{(p)},  \ldots ,  Z_{|T_p|}^{(p)}, \frac{(1-\beta_p)  B}{\beta_p},  Z_{|T_p|+1}^{(p)}, \ldots,   Z_{|T_p|+i^*-1}^{(p)},\\
& \qquad \qquad \qquad \qquad \qquad \ \quad Z_{|T_p|+i^*}^{(p)}+\frac{(1-\beta_p)(1-B)}{\beta_p},  Z_{|T_p|+i^*+1}^{(p)},  \ldots,  Z_{2|T_p|-p}^{(p)}\Big)
\end{align*}
for some random variable $B \sim \mathrm{Beta}(1,(2-\alpha)/(\alpha-1))$ which is independent of everything else. Recall that the vector $Z^{(p)}$ is independent of $\beta_p=M_{p}/M_{p+1}$.
Hence, by Lemma \ref{lem:recursion}, we have
\begin{align*}
&Z^{(p+1)}:=\beta_p \cdot \Big( Z_1^{(p)},  \ldots ,  Z_{|T_p|}^{(p)}, \frac{(1- \beta_p)  B}{\beta_p}, Z_{|T_p|+1}^{(p)}, \ldots,   Z_{|T_p|+i^*-1}^{(p)}, \\ 
&\qquad \qquad \qquad \quad \  Z_{|T_p|+i^*}^{(p)}+\frac{(1-\beta_p)(1-B)}{\beta_p},  Z_{|T_p|+i^*+1}^{(p)},  \ldots,  Z_{2|T_p|-p}^{(p)}\Big),
\end{align*}
which is distributed as
\[
\mathrm{Dir}\Big(\underbrace{1, \ldots, 1}_{|T_{p+1}|}, \frac{d_1 - 1 - \alpha}{\alpha-1}, \ldots,  \frac{d_{i^*-1} - 1 - \alpha}{\alpha-1}, \frac{d_{i^*} - 1 - \alpha}{\alpha-1} +1 + \frac{2-\alpha}{\alpha-1}, \ldots, \frac{d_{|T_p|-p} - 1 - \alpha}{\alpha - 1}\Big).
\]
This is the required distribution, since 
$
d_{i^*}+\alpha-1+2-\alpha=d_{i^*}+1,
$
which is indeed the degree of the selected vertex in $\mathcal T_{p+1}$.
Moreover, $Z^{(p+1)}$ is independent of $(M_{p+1},M_{p+2},\ldots)$ conditionally on $(T_1,\ldots,T_{p+1})$, since $Z^{(p)}, \beta_p$, $B$ and $(T_1,\ldots,T_{p+1})$ are independent of $(M_{p+1},M_{p+2},\ldots)$.

Suppose now that we pass from $T_p$ to $T_{p+1}$ by gluing the new edge to an existing one, say the edge with length $L^{(p)}_i$ (so that $|T_{p+1}|=|T_p|+2$). Then, conditionally on  this additional event, the distribution of $Z^{(p)}$ is now  $$\mathrm{Dir}\bigg(\underbrace{1, \ldots, 1}_{i-1},2,\underbrace{1, \ldots, 1}_{|T_p|-i},  \frac{d_1 - 1 - \alpha}{\alpha-1}, \ldots, \frac{d_{|T_p|-p} - 1 - \alpha}{\alpha - 1}\bigg),$$ by Lemma \ref{lem:biasDir}. 
Then, for some random variables $U \sim \mathrm U(0,1)$ and $B \sim \mathrm{Beta}(1,(2-\alpha)/(\alpha-1))$ independent of everything else,
\begin{align*}
&\Big(L_1^{(p+1)},\ldots ,L^{(p+1)}_{|T_{p+1}|},W_1^{(p+1)}, \ldots, W_{|T_{p+1}|-(p+1)}^{(p+1)}\Big) \\
&\qquad \qquad =\Big(M_p  Z_1^{(p)}, \ldots,  M_p Z_{i-1}^{(p)},  M_p  Z_i^{(p)}  U,  M_p  Z_{i+1}^{(p)},  \ldots,  M_p Z_{|T_p|}^{(p)},  M_p  Z_i^{(p)}  (1-U), \\ 
& \qquad \qquad \qquad  (M_{p+1}-M_p) B, M_p Z_{|T_p|+1}^{(p)}, \ldots M_p Z_{2|T_{p}|-p}^{(p)}, (M_{p+1}-M_p)(1-B) \Big) \\
& \qquad \qquad = M_{p+1} \cdot \beta_p \cdot  \Big(Z_1^{(p)}, \ldots,    Z_{i-1}^{(p)},  Z_i^{(p)}  U,   Z_{i+1}^{(p)},  \ldots,   Z_{|T_p|}^{(p)},  Z_i^{(p)}  (1-U), \\
& \qquad \qquad \qquad  \qquad  \qquad \quad \frac{(1-\beta_p)B}{\beta_p}, Z_{|T_p|+1}^{(p)}, \ldots M_p Z_{2|T_{p}|-p}^{(p)},   \frac{(1-\beta_p)(1-B)}{\beta_p}  \Big)\\
&\qquad \qquad =: M_{p+1} \cdot Z^{(p+1)}.
\end{align*}
The vector $ Z^{(p)}$ is independent of $\beta_p, U$ and $B$ and so, by Lemma \ref{lem:recursion2} and exchangeability, the distribution of $Z^{(p+1)}$ is  $$\mathrm{Dir}\bigg(\underbrace{1, \ldots, 1}_{|T_p|+2}, \frac{d_1 - 1 - \alpha}{\alpha-1}, \ldots, \frac{d_{|T_p|-p} - 1 - \alpha}{\alpha - 1}, \frac{2-\alpha}{\alpha-1}\bigg),$$
as required, since $|T_{p+1}|=|T_{p}|+2$ and the degree in $\mathcal T_{p+1}$ of the new vertex is 3.
Finally, $Z^{(p+1)}$ is independent of $(M_{p+1},M_{p+2},\ldots)$ conditionally on $(T_1,\ldots,T_{p+1})$, since $Z^{(p)}, \beta_p$, $B$, $U$ and $(T_1,\ldots,T_{p+1})$ are independent of $(M_{p+1},M_{p+2},\ldots)$.
\end{proof}

\begin{cor}
\label{cor:distribLW}
For $p\geq 2$, conditionally on the shapes $T_1,\ldots,T_p$, we have
$$
\Big(L_1^{(p)},\ldots,L^{(p)}_{|T_p|},M_p-L_p \Big)=M_p \cdot \Big(\tilde Z^{(p)}_1, \tilde Z^{(p)}_2,\ldots, \tilde Z^{(p)}_{|T_p|+1} \Big),
$$
where
$$\tilde Z^{(p)}:=\Big(\tilde Z^{(p)}_1, \tilde Z^{(p)}_2, \ldots, \tilde Z^{(p)}_{|T_p|+1}\Big) \sim \mathrm{Dir}\Big(\underbrace{1, \ldots, 1}_{|T_p|}, \frac{p\alpha- 1}{\alpha-1}-|T_p|\Big),$$ $Z^{(p)}$ is independent of $(M_p,M_{p+1},\ldots)$ and $M_p \sim \mathrm{ML}(1-1/\alpha, p-1/\alpha)$.
\end{cor}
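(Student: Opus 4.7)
The plan is to deduce the corollary directly from Proposition \ref{prop:constru2} together with the aggregation property of the Dirichlet distribution recalled just before Lemma \ref{lem:biasDir} (namely, that if $(D_1,\ldots,D_n) \sim \mathrm{Dir}(a_1,\ldots,a_n)$, then $(D_1+D_2,D_3,\ldots,D_n) \sim \mathrm{Dir}(a_1+a_2,a_3,\ldots,a_n)$, and iterated merging likewise). By construction, $M_p-L_p = \sum_{v \in \mathcal I_p} W_v^{(p)}$, so if we start from the identity supplied by Proposition \ref{prop:constru2},
$$
\Big(L_1^{(p)},\ldots, L_{|T_p|}^{(p)}, W_1^{(p)}, \ldots, W_{|T_p|-p}^{(p)}\Big) = M_p \cdot \Big(Z_1^{(p)}, \ldots, Z_{2|T_p|-p}^{(p)}\Big),
$$
and sum the last $|T_p|-p$ coordinates, the left-hand side becomes precisely $(L_1^{(p)},\ldots,L_{|T_p|}^{(p)}, M_p-L_p)$, while on the right-hand side the Dirichlet merging property yields a new Dirichlet vector $\tilde Z^{(p)}$ with parameters $(\underbrace{1,\ldots,1}_{|T_p|}, \sum_{i=1}^{|T_p|-p} \tfrac{d_i-1-\alpha}{\alpha-1})$.

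The only remaining point is to identify this last Dirichlet parameter with $\tfrac{p\alpha-1}{\alpha-1} - |T_p|$. This is immediate from the combinatorial identity stated in Remark \ref{rem:somme}:
$$
\sum_{i=1}^{|T_p|-p} (d_i - 1 - \alpha) = \sum_{d \geq 3}(d-1-\alpha)\, \#\{\text{vertices of degree }d\text{ in }T_p\} = p\alpha - 1 - |T_p|(\alpha-1),
$$
so that dividing through by $\alpha-1$ gives exactly $\tfrac{p\alpha-1}{\alpha-1}-|T_p|$, as required.

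The marginal statement $M_p \sim \mathrm{ML}(1-1/\alpha, p-1/\alpha)$ and the independence of $\tilde Z^{(p)}$ from the tail $(M_p, M_{p+1}, \ldots)$ (conditionally on $T_1,\ldots,T_p$) are inherited from Proposition \ref{prop:constru2}, since aggregating coordinates of a random vector preserves its independence from another random object. There is no real obstacle here: the whole argument is a one-step consequence of the proposition once one recognizes that Remark \ref{rem:somme} furnishes exactly the bookkeeping identity needed to recognize $\sum_i \tfrac{d_i-1-\alpha}{\alpha-1}$ as $\tfrac{p\alpha-1}{\alpha-1}-|T_p|$.
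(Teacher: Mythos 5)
Your proof is correct and follows exactly the route the paper itself takes: Proposition~\ref{prop:constru2}, the additive (aggregation) property of the Dirichlet distribution, the identity $\sum_{v \in \mathcal I_p} W_v^{(p)} = M_p - L_p$, and the bookkeeping identity of Remark~\ref{rem:somme} to recognize the merged parameter as $\frac{p\alpha-1}{\alpha-1}-|T_p|$. Nothing is missing.
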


This follows from Remark \ref{rem:somme}, the fact that $\sum_{v \in \mathcal I_p}W^{(p)}_v=M_p-L_p$ and the additive property of the Dirichlet distribution mentioned at the beginning of Section \ref{sec:DR}. Note that a consequence of this result is that the distribution of the sequence of lengths of $\mathcal T_p$, given its shape $T_p$, is exchangeable and depends only on $|T_p|$.  In particular it does not depend on how the lengths were labelled. 

In a similar manner, the following corollary is a consequence of Proposition~\ref{prop:constru2} and Lemma~\ref{lem:decompDir}.

\begin{cor} \label{cor:distribLW2}
For $p \ge 2$, conditionally on the shapes $T_1, \ldots, T_p$, we have
\[
\left(L_p, W^{(p)}\right) = M_p \cdot \left(P_p, \ (1-P_p) \cdot \left(\hat{Z}^{(p)}_1, \hat{Z}^{(p)}_2, \ldots, \hat{Z}^{(p)}_{|T_p| - p} \right) \right),
\]
where
\[
P_p  \sim \mathrm{Beta} \left(|T_p|, \frac{p \alpha - 1 - |T_p|(\alpha-1)}{\alpha-1} \right)
\]
and
\[
\left(\hat{Z}^{(p)}_1, \hat{Z}^{(p)}_2, \ldots, \hat{Z}^{(p)}_{|T_p| - p} \right)  \sim \mathrm{Dir} \left( \frac{d_1 - 1 - \alpha}{\alpha-1}, \ldots, \frac{d_{|T_p|-p} - 1 - \alpha}{\alpha - 1}\right)
\]
are mutually independent and independent of $(M_p, M_{p+1}, \ldots)$.
\end{cor}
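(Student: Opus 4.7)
The plan is to derive this directly from Proposition~\ref{prop:constru2} together with the classical aggregation/decomposition properties of the Dirichlet distribution recalled in Lemma~\ref{lem:decompDir}. By Proposition~\ref{prop:constru2}, conditionally on $T_1,\ldots,T_p$, the concatenated vector of lengths and weights has the representation
\[
(L^{(p)}, W^{(p)}) = M_p \cdot Z^{(p)},
\]
where $Z^{(p)} \sim \mathrm{Dir}\bigl(\underbrace{1,\ldots,1}_{|T_p|}, \tfrac{d_1-1-\alpha}{\alpha-1}, \ldots, \tfrac{d_{|T_p|-p}-1-\alpha}{\alpha-1}\bigr)$ is independent of $(M_p, M_{p+1}, \ldots)$. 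Since $L_p = \sum_{i=1}^{|T_p|} L_i^{(p)}$, we have $L_p = M_p \cdot P_p$ with $P_p := \sum_{i=1}^{|T_p|} Z^{(p)}_i$.

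Next I would apply Lemma~\ref{lem:decompDir} to split $Z^{(p)}$ into its first $|T_p|$ coordinates and its last $|T_p|-p$ coordinates. This lemma guarantees that $P_p$ is Beta-distributed with parameters equal to the sum of each group of Dirichlet parameters, and that, conditionally on $P_p$, the two sub-blocks $P_p^{-1}(Z^{(p)}_1,\ldots,Z^{(p)}_{|T_p|})$ and $(1-P_p)^{-1}(Z^{(p)}_{|T_p|+1},\ldots,Z^{(p)}_{2|T_p|-p})$ are independent Dirichlet vectors with the parameters of the respective blocks, all three objects being mutually independent. In particular, setting $(\hat Z^{(p)}_1,\ldots,\hat Z^{(p)}_{|T_p|-p}) := (1-P_p)^{-1}(Z^{(p)}_{|T_p|+1},\ldots,Z^{(p)}_{2|T_p|-p})$ gives $W^{(p)} = M_p(1-P_p)\cdot(\hat Z^{(p)}_1,\ldots,\hat Z^{(p)}_{|T_p|-p})$, which is the desired form.

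It then remains to identify the Beta parameters. The first is trivially $\sum_{i=1}^{|T_p|} 1 = |T_p|$, while the second is
\[
\sum_{j=1}^{|T_p|-p} \frac{d_j - 1 - \alpha}{\alpha-1} = \frac{p\alpha - 1 - |T_p|(\alpha-1)}{\alpha-1},
\]
using the identity from Remark~\ref{rem:somme}. This gives $P_p \sim \mathrm{Beta}\bigl(|T_p|, \tfrac{p\alpha - 1 - |T_p|(\alpha-1)}{\alpha-1}\bigr)$ and $(\hat Z^{(p)}_1,\ldots,\hat Z^{(p)}_{|T_p|-p}) \sim \mathrm{Dir}\bigl(\tfrac{d_1-1-\alpha}{\alpha-1},\ldots,\tfrac{d_{|T_p|-p}-1-\alpha}{\alpha-1}\bigr)$, as claimed. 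The joint independence from $(M_p, M_{p+1}, \ldots)$ is inherited from that of $Z^{(p)}$ in Proposition~\ref{prop:constru2}, since $P_p$ and $\hat Z^{(p)}$ are measurable functions of $Z^{(p)}$. There is no real obstacle here: the argument is a purely mechanical application of Dirichlet aggregation/decomposition, and the only thing one must check carefully is that the two Beta parameters add up correctly, which is guaranteed by Remark~\ref{rem:somme}.
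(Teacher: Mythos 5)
Your proof is correct and follows exactly the route the paper indicates for this corollary: the representation from Proposition~\ref{prop:constru2} combined with the Dirichlet aggregation/decomposition of Lemma~\ref{lem:decompDir}, with the Beta parameters identified via the identity in Remark~\ref{rem:somme}. Nothing further is needed.
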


Finally, we turn to the equivalence of the two versions of our construction.
\begin{prop}
The sequences of trees with edge-lengths generated by versions (I) and (II) of our construction have the same law.
\end{prop}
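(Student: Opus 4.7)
The plan is to proceed by induction on $p \geq 1$, showing that the joint laws of $(\mathcal T_1, \ldots, \mathcal T_p)$ produced by the two algorithms coincide. The base case $p=1$ is immediate, since both algorithms construct a single segment of length $M_1$. For the inductive step, I couple the two algorithms so that they share the same Markov chain $(M_p, p \ge 1)$, the same independent $\mathrm{Beta}(1,(2-\alpha)/(\alpha-1))$ variables $B$ used at each step (hence the same new branch lengths), and the same uniform position along $\mathcal T_p$ when the new branch is attached to an edge. With this coupling, the only potential discrepancy lies in the probabilities with which each edge and each internal vertex is selected at step $p+1$.

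Both versions select the set of edges (and then a uniform position) with total probability $L_p/M_p$, so the conditional law given $\mathcal T_p$ and $M_p$ of an attachment to an edge matches trivially. It remains to compare, for each internal vertex $v \in \mathcal I_p$ of degree $d_v$, the probability that $v$ is chosen. In version (I) this probability equals $(1 - L_p/M_p) \cdot (d_v - 1 - \alpha)/(p\alpha - 1 - |T_p|(\alpha-1))$, while in version (II) it equals $W^{(p)}_v/M_p$, where $W^{(p)}$ is random. By Corollary~\ref{cor:distribLW2}, conditionally on the shapes $T_1, \ldots, T_p$ and independently of $(M_p, M_{p+1}, \ldots)$, one has $W^{(p)}_v/M_p = (1 - L_p/M_p) \cdot \hat Z^{(p)}_v$, where $\hat Z^{(p)}$ has the Dirichlet distribution with parameters $(d_u-1-\alpha)/(\alpha-1)$, $u \in \mathcal I_p$. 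Taking conditional expectations and using Remark~\ref{rem:somme} to evaluate the sum of parameters yields
\[
\mathbb{E}\big[W^{(p)}_v/M_p \,\big|\, \mathcal T_p, M_p\big] = \left(1 - \frac{L_p}{M_p}\right) \cdot \frac{d_v - 1 - \alpha}{p\alpha - 1 - |T_p|(\alpha-1)},
\]
which exactly matches the probability in version (I).

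Consequently, the two versions share the same conditional distribution of $\mathcal T_{p+1}$ given $(\mathcal T_1, \ldots, \mathcal T_p)$ once the internal weights of version (II) are integrated out, closing the induction and yielding the equality in law of $(\mathcal T_p, p \geq 1)$. The only subtle point to check is that the argument invokes Corollary~\ref{cor:distribLW2}, which is a statement internal to version (II); since that corollary was established unconditionally via Proposition~\ref{prop:constru2} prior to the present comparison, no circularity is introduced, and the rest of the proof reduces to the displayed computation together with the coupling of the two algorithms' shared randomness.
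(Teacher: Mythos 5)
Your proof is correct and follows essentially the same route as the paper: both arguments reduce the comparison to the vertex-selection step and use Corollary~\ref{cor:distribLW2} (together with the fact that a $\mathrm{Dir}$ coordinate has mean equal to its parameter over the sum of parameters, i.e.\ Lemma~\ref{lem:biasDir}) to show that averaging version (II)'s normalized weights recovers version (I)'s degree-based probabilities. The paper phrases this as matching Markov transition kernels rather than as an explicit induction with a coupling, but the mathematical content is identical.
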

\begin{proof}
Consider the sequence $((T_p, (L_1^{(p)}, \ldots, L^{(p)}_{|T_p|}), \{W_{v}^{(p)}, v \in \mathcal I_p \}), p \ge 1)$ generated by version (II) of our construction.  This sequence evolves in a Markovian manner (as detailed above).  It suffices to show that the transition probabilities for the shape and lengths are as given in version (I) if we average over the vertex weights.  So fix $p \ge 1$ and a tree with edge-lengths $t_p$ (of shape $\mathrm t_p$), and consider the $(p+1)$th step of the construction, conditional on $\mathcal T_p = t_p$ with vertex degrees $d_v, v \in \mathrm{t}_p$, $(L^{(p)}_1, \ldots, L^{(p)}_{|\mathrm t_p|}) = (\ell_1, \ldots, \ell_{|\mathrm t_p|})$ and $M_p = m_p$.  The new length is generated in the same way in both algorithms, and it is clear that when we choose to glue the new branch to a pre-existing edge, we do the same in both versions of the construction. So suppose instead that we choose to glue the new branch to one of the vertices of $\mathrm t_p$, an event which occurs with probability $(m_p-\ell_p)/m_p$ in either construction. In version (II), conditionally on this event, we pick the vertex $v \in \mathrm{t}_p$ with probability $W_v^{(p)}/\sum_{w \in \mathrm t_p} W_w^{(p)}$.  But by Corollary~\ref{cor:distribLW2}, still conditionally on the event that the new branch will be attached to a vertex, the distribution of 
\[
\frac{1}{\sum_{v \in \mathrm t_p} W_v^{(p)}} (W_v^{(p)}, v \in \mathrm{t}_p)
\] 
is Dirichlet with parameters $\left(\frac{d_v - 1 - \alpha}{\alpha-1}, v \in \mathrm t_p \right)$.  But then by Lemma~\ref{lem:biasDir}, we see that when we average over the normalized weights, we pick vertex $v \in \mathrm{t}_p$ with probability $\frac{d_v - 1 - \alpha}{p\alpha - 1 - |\mathrm t_p|(\alpha-1)}$, as in version (I).  The result follows.
\end{proof}

\subsection{Identification of edge-lengths given the shape}
\label{sec:l}

As observed in the previous section, the sequences of trees obtained in versions (\textsc{I}) and (\textsc{II}) of the algorithm have the same distribution. From now on, we denote this sequence by $(\mathcal  T_p,p\geq 1)$, without specifying whether it is obtained using the first or second version of the algorithm, and similarly for the sequence of shapes $(T_p,p\geq 1)$. Also, as observed in Corollary \ref{cor:distribLW}, the sequence of lengths of $\mathcal T_p$ given $T_p$ is exchangeable. To be consistent with the notation for the lengths of $\mathcal T_{\alpha,p}$, we can (and will) therefore use the notation
$$
L^{(p)}=\big(L_e^{(p)}, e \in E(T_p) \big)
$$
to denote the sequence of edge-lengths of $\mathcal T_p$.

\begin{prop} 
\label{prop:lengths}
Let $\mathrm t$ be a discrete rooted tree with $p \geq 2$ leaves. Then, 
the sequence of lengths $\big(L_e^{(p)}, e \in E(T_p) \big)$ of $\mathcal T_p$ conditional on $T_p= \mathrm t$ and the sequence of lengths $\big(L^{(\alpha,p)}_e, e \in E(T_{\alpha,p}) \big)$ of $\mathcal T_{\alpha,p}$ conditional on $T_{\alpha,p}= \mathrm t$ have the same distribution, which is that of
$$M_p \cdot B_{|\mathrm t|} \cdot (D_1,\ldots ,D_{ |\mathrm t|}),$$ where the three random variables are independent, and $M_p \sim  \mathrm{ML}(1 - 1/\alpha, p - 1/\alpha)$, $B_{ |\mathrm t|} \sim \mathrm{Beta}\big(|\mathrm t|, \frac{p\alpha - 1}{\alpha - 1} - |\mathrm t| \big)$  and $(D_1,\ldots,D_{ |\mathrm t|}) \sim \mathrm{Dir}(1,\ldots,1)$.
\end{prop}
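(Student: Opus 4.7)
The plan is to split the proposition into two parts: (a) establish the conditional distribution of $(L^{(p)}_e,\, e\in E(T_p))$ given $T_p=\mathrm t$ directly from the algorithm, and (b) verify that the Duquesne--Le Gall density of Theorem~\ref{thm:DuLG} yields the same conditional law for $(L^{(\alpha,p)}_e,\, e\in E(T_{\alpha,p}))$ given $T_{\alpha,p}=\mathrm t$.

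Part (a) is almost immediate from the work already done in Section~\ref{sec:lw}. By Corollary~\ref{cor:distribLW}, conditionally on $T_p=\mathrm t$, one has
\[
\bigl(L_1^{(p)},\ldots,L_{|\mathrm t|}^{(p)},\,M_p-L_p\bigr)=M_p\cdot\tilde Z^{(p)},
\]
with $M_p\sim\mathrm{ML}(1-1/\alpha,p-1/\alpha)$ independent of the vector $\tilde Z^{(p)}\sim\mathrm{Dir}\bigl(1,\ldots,1,(p\alpha-1)/(\alpha-1)-|\mathrm t|\bigr)$ (the first $|\mathrm t|$ parameters equalling $1$). Retaining the first $|\mathrm t|$ coordinates and applying Lemma~\ref{lem:decompDir} gives the factorization $M_p\cdot B_{|\mathrm t|}\cdot (D_1,\ldots,D_{|\mathrm t|})$ with the Beta and Dirichlet distributions prescribed in the statement, the three random variables being mutually independent.

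For part (b), the essential observation is that the D--LG density depends on $(\ell_e)$ only through the sum $s=\sum_{e}\ell_e$ and is therefore exchangeable; any exchangeable density on $\mathbb R_+^{|\mathrm t|}$ which depends only on the sum can be written as $S^{\ast}\cdot (D_1^{\ast},\ldots,D_{|\mathrm t|}^{\ast})$ with $(D_i^{\ast})\sim\mathrm{Dir}(1,\ldots,1)$ independent of a positive random variable $S^{\ast}$. It thus suffices to check that $S^{\ast}\stackrel{\mathrm d}{=}M_p\,B_{|\mathrm t|}$. I plan to do this by matching moments. Using the moment formulas for the Mittag-Leffler and Beta distributions recalled in Section~\ref{sec:MLD}, the algorithm side telescopes into
\[
\mathbb E\bigl[(M_p B_{|\mathrm t|})^{\gamma}\bigr]=\frac{\Gamma(p-1/\alpha)\,\Gamma(|\mathrm t|+\gamma)}{\Gamma\bigl(p-1/\alpha+(1-1/\alpha)\gamma\bigr)\,\Gamma(|\mathrm t|)}.
\]
On the D--LG side, writing $\beta=1-1/\alpha$, I swap the order of integration in the expression for $\mathbb E[(S^{\ast})^{\gamma}]$, perform the substitution $s=tu^{\beta}$ to reduce the inner integral to a Beta integral of the form $\beta\,t^{|\mathrm t|+\gamma}B(\beta(|\mathrm t|+\gamma),\,p-1/\alpha-\beta|\mathrm t|)$, and then apply the standard identity $\int_0^\infty t^{r}g_{\beta}(t)\,dt=\mathbb E[\sigma_{\beta}^{-\beta r}]=\Gamma(r+1)/\Gamma(\beta r+1)$ (valid for $r>-1$) to the outer integral. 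After straightforward simplifications via $\Gamma(a+1)=a\Gamma(a)$, the same closed form emerges.

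The main technical obstacle is the D--LG moment computation: the substitution $s=tu^{\beta}$ and the subsequent bookkeeping with Gamma function identities are elementary but need to be carried out carefully, in particular to ensure that the exponents of $t$, $u$ and $s$ combine correctly so that the Beta integral collapses to the required form. Once these manipulations are performed, moment-determinacy (guaranteed by the rapid decay of $g_{\beta}$ at infinity, and hence of the density of $M_pB_{|\mathrm t|}$) yields $S^{\ast}\stackrel{\mathrm d}{=}M_pB_{|\mathrm t|}$, and the proposition follows.
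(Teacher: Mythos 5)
Your proposal follows essentially the same route as the paper: part (a) is exactly the paper's argument (Corollary~\ref{cor:distribLW} plus Lemma~\ref{lem:decompDir}), and part (b) matches the paper's proof, which also exploits that the Duquesne--Le Gall density depends on the lengths only through their sum and then identifies the law of the sum by computing its integer moments via the Mellin-transform identity for $q_{1-1/\alpha}$ followed by a Beta integral. The only quibble is the substitution should read $s=t(1-u)^{\beta}$ rather than $s=tu^{\beta}$ (it is $1-u$ that sits in the argument of the stable density), but the Beta integral you record is the correct one, so this is a harmless slip.
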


\begin{proof}
For $\mathcal T_p$, this is an immediate consequence of Corollary \ref{cor:distribLW} and Lemma \ref{lem:decompDir}. For $\mathcal T_{\alpha,p}$, we use Theorem~\ref{thm:DuLG}.  Conditionally on $T_{\alpha,p} = \mathrm t$, the joint density of the edge-lengths depends on $(L^{(\alpha,p)}_e, e \in E(\mathrm t))$ only through $\sum_{e \in E(\mathrm t)} L^{(\alpha,p)}_e$, where $E(\mathrm t)$ designs the set of edges of $\mathrm t$. This entails that the random vector 
\[
\frac{1}{\sum_{e \in E(\mathrm t)} L^{(\alpha,p)}_e} (L^{(\alpha,p)}_e, e \in E(\mathrm t))
\]
has $\mathrm{Dir}(1,1,\ldots,1)$ distribution and is independent of $\sum_{e \in E(\mathrm t)} L^{(\alpha,p)}_e$.  Moreover, $\sum_{e \in E(\mathrm t)} L^{(\alpha,p)}_e$ has density
\[
\frac{1}{\Gamma(|\mathrm t|)} \frac{\Gamma(p-1/\alpha)}{\Gamma(p - (1 - 1/\alpha)|\mathrm t| - 1/\alpha)}   x^{|\mathrm t|-1}  \int_0^1 u^{p - (1 - 1/\alpha)|\mathrm t| - 1/\alpha - 1} q_{1-1/\alpha}( x, 1-u) \d u.
\]
Mimicking the proof of Lemma 4 in \cite{Mier03}, we have that, for $k \ge 1$,
\[
\int_0^{\infty} x^{|\mathrm t|+k-1} q_{1-1/\alpha}(x, 1-u)\d x = \frac{\Gamma(|\mathrm t|+k)}{\Gamma((|\mathrm t|+k)(1 - 1/\alpha))} (1 - u)^{(|\mathrm t|+k)(1 - 1/\alpha) -1}
\]
and so
\begin{align*}
& \E{\Bigg(\sum_{v \in  E(T_{\alpha,p})} L^{(\alpha,p)}_e \Bigg)^k \Bigg| T_{\alpha,p} = \mathrm t} \\
&= \frac{\Gamma(p-1/\alpha)\Gamma(|\mathrm t|+k) \alpha^{-k} }{\Gamma(|\mathrm t|) } \int_0^1 \frac{u^{p - (1 - 1/\alpha)|\mathrm t| - 1/\alpha - 1} (1 - u)^{(|\mathrm t|+k)(1 - 1/\alpha) -1}}{\Gamma(p - (1 - 1/\alpha)|\mathrm t| - 1/\alpha)\Gamma((|\mathrm t|+k)(1 - 1/\alpha))} \d u \\
& = \frac{\Gamma(p-1/\alpha)\Gamma(|\mathrm t|+k) \alpha^{-k} }{\Gamma(|\mathrm t|) \Gamma(k(1 - 1/\alpha) + p - 1/\alpha)} \\
& = \E{B_{|\mathrm t|}^k}\E{M_p^k}.
\end{align*}
The result follows.
\end{proof}

\subsection{Identification of shapes}
\label{sec:s}

We will now observe that our construction is intimately related to earlier work of Marchal.  Using (\ref{eqn:comb}) in Theorem \ref{thm:DuLG}, Marchal~\cite{Marchal} gives an algorithm which generates a sequence of rooted discrete trees $(\tilde T_p)_{p \ge 1}$ having the same distribution as the sequence of shapes $(T_{\alpha,p},p \geq 1)$. 

\begin{center}
\fbox{
\mbox{\begin{minipage}[t]{0.85\textwidth}
\textsc{Marchal's algorithm}
\begin{itemize}
\item Start from a tree $\tilde T_1$ consisting of the root joined by a single edge to another vertex labelled $1$.
\item For $p \ge 1$, assign the edges of $\tilde T_p$ weight $\alpha - 1$ and any vertex of degree $d \ge 3$ weight $d-1-\alpha$. Assign weight 0 to vertices of degree 1 or 2.
\begin{enumerate}
\item Pick an edge or a vertex with probability proportional to its weight.
\item If an edge was chosen in 1., split the edge into two with a new vertex in the middle and add an edge from that vertex to a new leaf, labelled $p+1$.
\item If a vertex was chosen in 1., create a new edge from the vertex to a leaf labelled $p+1$.
\end{enumerate}
\end{itemize}
\end{minipage}}
}
\end{center}

The case $\alpha = 2$ of this algorithm generates a uniform random binary rooted tree with $p$ labelled leaves, and is due to R\'emy~\cite{Remy}.  In \cite{Marchal}, Marchal proved that $p^{1/\alpha-1} \tilde T_p \to \mathcal{T}_{\alpha}$  in the sense of convergence of random finite dimensional distributions; this was improved to convergence in probability for the Gromov-Hausdorff distance in \cite{HMPW} and then to almost sure convergence in \cite{CurienHaas}.  
It is not hard to see (by induction on $p$) that the total weight on the edges and vertices of $\tilde T_p$ is $p\alpha-1$.

\begin{prop} \label{prop:Marchal}
The sequence of shapes of trees $(T_p,p\geq 1)$ follows Marchal's algorithm.  As a consequence, $(T_p,p\geq 1)$ and $(T_{\alpha,p},p\geq 1)$ have the same distribution.
\end{prop}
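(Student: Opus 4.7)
The plan is to proceed by induction on $p$ to show that the sequence of shapes $(T_p, p \ge 1)$ produced by our algorithm has the same law as the sequence $(\tilde T_p, p \ge 1)$ generated by Marchal's algorithm. The base case $p=1$ is immediate, since in both constructions $T_1$ consists of the root joined by a single edge to one labelled leaf. For the inductive step, I must verify that the conditional law of $T_{p+1}$ given $T_p = \mathrm t$ agrees with Marchal's transition rule: select an edge $e \in E(\mathrm t)$ with probability $(\alpha-1)/(p\alpha-1)$, or select an internal vertex $v$ of degree $d_v$ with probability $(d_v-1-\alpha)/(p\alpha-1)$, then graft the new leaf accordingly. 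Since the combinatorial shape update is identical in the two algorithms once such a selection has been made (splitting an edge creates a new degree-3 vertex, grafting at $v$ raises its degree by one), the inductive step reduces to matching these selection probabilities.

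Working with version (\textsc{I}) of our algorithm, conditionally on everything built so far the new branch is glued to a particular edge $e$ with probability $L^{(p)}_e/M_p$, and to a particular internal vertex $v$ with probability
\[
\frac{M_p-L_p}{M_p} \cdot \frac{d_v-1-\alpha}{p\alpha-1-|T_p|(\alpha-1)}.
\]
To obtain the transition probabilities conditional only on $T_p = \mathrm t$, I would average over the lengths and weights using Corollary~\ref{cor:distribLW}: conditionally on $T_p = \mathrm t$, the vector $(L^{(p)}_e/M_p,\, e \in E(\mathrm t);\, (M_p-L_p)/M_p)$ is $\mathrm{Dir}(\underbrace{1,\dots,1}_{|\mathrm t|}, (p\alpha-1)/(\alpha-1)-|\mathrm t|)$ with total parameter $(p\alpha-1)/(\alpha-1)$. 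The standard formula for Dirichlet marginal means then gives $\mathbb E[L^{(p)}_e/M_p \mid T_p = \mathrm t] = (\alpha-1)/(p\alpha-1)$ and $\mathbb E[(M_p-L_p)/M_p \mid T_p = \mathrm t] = (p\alpha-1-|\mathrm t|(\alpha-1))/(p\alpha-1)$.

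Substituting, the unconditional (given $T_p = \mathrm t$) probability of picking edge $e$ is exactly $(\alpha-1)/(p\alpha-1)$, and the probability of picking vertex $v$ is
\[
\frac{p\alpha-1-|\mathrm t|(\alpha-1)}{p\alpha-1}\cdot\frac{d_v-1-\alpha}{p\alpha-1-|\mathrm t|(\alpha-1)}=\frac{d_v-1-\alpha}{p\alpha-1},
\]
matching Marchal's weights (with the correct normalization $p\alpha-1$, consistent with Remark~\ref{rem:somme}). This closes the induction and yields $(T_p, p\ge 1) \stackrel{(d)}{=} (\tilde T_p, p\ge 1)$; combining with Marchal's theorem that $(\tilde T_p,p\ge 1) \stackrel{(d)}{=} (T_{\alpha,p}, p\ge 1)$ gives the second assertion. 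I do not expect a serious obstacle here: the only point requiring a moment's care is to observe that the position of the glue-point along a chosen edge and the length of the newly attached branch are irrelevant for the combinatorial shape, so that a pure marginal-mean computation from Corollary~\ref{cor:distribLW} suffices to identify the shape dynamics with Marchal's.
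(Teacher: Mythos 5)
Your proposal is correct and follows essentially the same route as the paper: condition on the constructed trees to write down the edge- and vertex-selection probabilities from version (\textsc{I}), then average over the lengths and weights using Corollary~\ref{cor:distribLW} to obtain Marchal's weights $(\alpha-1)/(p\alpha-1)$ per edge and $(d_v-1-\alpha)/(p\alpha-1)$ per internal vertex. The only cosmetic difference is that you compute the Dirichlet marginal means directly where the paper invokes Lemma~\ref{lem:biasDir}; these are the same calculation.
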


\begin{proof}
We use the first version of the line-breaking construction.
Given $\mathcal T_1,\ldots, \mathcal T_p$,  the new edge will be attached to the edge of length $L_e^{(p)}$ with probability
$$
\frac{L^{(p)}_e}{M_p}
$$ 
and to a given internal vertex of degree $d \geq 3$ with probability 
$$
\frac{M_p-L_p}{M_p} \times \frac{d-1-\alpha}{p\alpha-1-|T_p|(\alpha-1)}.
$$
Applying Corollary \ref{cor:distribLW} and Lemma \ref{lem:biasDir}, we get that, given the shapes of trees $T_1,\ldots,T_p$, the new edge is attached to a given edge with probability 
$$
\frac{\alpha-1}{p\alpha-1}
$$
and is attached to a given vertex of degree $d \geq 3$  with probability
\[
\frac{p\alpha-1-|T_p|(\alpha-1)}{p\alpha-1}\times \frac{d-1-\alpha}{p\alpha-1-|T_p|(\alpha-1)}=\frac{d-1-\alpha}{p\alpha-1}. \qedhere
\]
\end{proof}

\subsection{Proofs of the main results}
\label{sec:proofth}

\begin{proof}[Proof of Theorem \ref{thm:main}]
As an immediate consequence of Propositions \ref{prop:lengths} and  \ref{prop:Marchal}, we see that the trees $\mathcal T_p$ and $\mathcal T_{\alpha,p}$ have the same distribution, for all $p\geq 1$. It remains to check that the joint distributions are the same. 
In order to do this, we introduce an auxiliary labelling: label the leaves of each of these trees by order of appearance (i.e.\ the leaf we remove from $\mathcal T_{p+1}$ to obtain $\mathcal T_p$ is labelled $p+1$; similarly for $\mathcal T_{\alpha,p+1}$ and $\mathcal T_{\alpha,p}$). We denote by $\mathcal T^{\mathrm{lab}}_p$, $\mathcal T^{\mathrm{lab}}_{\alpha,p}$, $T^{\mathrm{lab}}_p$ and $T^{\mathrm{lab}}_{\alpha,p}$ the resulting labelled trees, with and without edge-lengths respectively, for $p\geq 1$.  By Proposition~\ref{prop:Marchal}, we see that $T^{\mathrm{lab}}_p$ and $T^{\mathrm{lab}}_{\alpha,p}$ have the same distribution. Since, moreover, the distribution of the lengths does not depend on the labelling, by Proposition \ref{prop:lengths}, we see that $\mathcal T^{\mathrm{lab}}_p$ and $\mathcal T^{\mathrm{lab}}_{\alpha,p}$ have the same distribution. But, removing the leaf $p$ and the adjacent edge in $\mathcal T^{\mathrm{lab}}_p$, we obtain $\mathcal T^{\mathrm{lab}}_{p-1}$ and similarly for the stable marginals. Iterating this leaf-deletion process, we finally obtain
$$
\big(\mathcal T_{1},\ldots,\mathcal T_{p}\big) \overset{\mathrm{(d)}}= \big(\mathcal T_{\alpha,1},\ldots,\mathcal T_{\alpha,p}\big),
$$
for all $p\geq 1$.
\end{proof}

\begin{proof}[Proof of Proposition~\ref{cor:distns}] (i) is part of Proposition \ref{prop:lengths}. Summing the lengths, we immediately get (ii). Finally, we get from the algorithmic  construction of $\mathcal T_p$ that is total length is distributed as (\ref{totallength}), from which we deduce (iii), since $\mathcal T_{\alpha,p}$ and $\mathcal T_p$ have the same distribution. 
\end{proof}

\section{Complements and connections} \label{sec:complements}

The main goal of this section is to explain the presence of generalized Mittag-Leffler distributions in our line-breaking construction.

\subsection{The Mittag-Leffler and Poisson-Dirichlet distributions}

The generalized Mittag-Leffler distributions arise naturally in the context of urn models.  Let $\beta \in (0,1)$ and $\theta > 0$.  Consider a generalized P\'olya urn scheme, where we have two colours, black and white.  Suppose that we pick each colour with probability proportional to the total weight of that colour in the urn.  If we pick black, add $1/\beta$ to the black weight.  If we pick white, add $1/\beta-1$ to the black weight and 1 to the white weight.  Start from 0 weight on black and $\theta/\beta$ weight on white and let $Y_n$ be the weight of white at step $n$.  Then by Theorem 1.3(v) of Janson~\cite{Janson}, 
\[
n^{-\beta} Y_n \to W \quad \text{ a.s. as $n \to \infty$,}
\]
where $W \sim \mathrm{ML}(\beta,\theta)$. (Janson states this as a convergence in distribution but it is straightforward to see by an argument using the martingale convergence theorem that the convergence must be almost sure; see Theorem 1.7 of \cite{Janson} for the characterization of the limit $W$ via its moments.)

Consider now the Poisson-Dirichlet distribution $\mathrm{PD}(\beta,\theta)$ defined as follows.  For $i \ge 1$, let $B_i \sim \mathrm{Beta}(1-\beta, \theta + i \beta)$ independently.  Now let
\[
P_j = B_j \prod_{i=1}^{j-1} (1 - B_i)
\]
and let $(P_i^{\downarrow})_{i \ge 1}$ be the sequence $(P_i)_{i \ge 1}$ ranked in decreasing order.  Then $(P_i^{\downarrow})_{i \ge 1}$ has the $\mathrm{PD}(\beta,\theta)$ distribution.  It then turns out that the so-called \emph{$\beta$-diversity}, $W$, defined to be the following almost sure limit,
\[
W := \Gamma(1 - \beta) \lim_{i \to \infty} i (P_i^{\downarrow})^{\beta},
\]
has the $\mathrm{ML}(\beta,\theta)$ distribution.  A connection to the generalized P\'olya urn can be made via the Chinese restaurant process construction (see \cite{PitmanStFl}) of an exchangeable random partition of $\N$ having $\mathrm{PD}(\beta,\theta)$ asymptotic frequencies.  There, if the number of tables occupied by the first $n$ customers is $K_n$, the $\beta$-diversity also arises as the almost sure limit
\[
W = \lim_{n \to \infty} n^{-\beta} K_n.
\]
Indeed, the number of tables (plus $1-\theta/\beta$, but this difference vanishes in the limit) evolves precisely according to the generalized P\'olya urn discussed above.

This means that we can think of results about $\mathrm{ML}(\beta,\theta)$ as results about $\mathrm{PD}(\beta,\theta)$ and vice versa.

\subsection{Masses and lengths in the stable trees}

The natural scaling relation for the $\alpha$-stable tree states that if we rescale the total mass by $x \in \R_+$, we must rescale distances by $x^{1-1/\alpha}$.  The self-similarity of the tree means that there are various natural ways to split it into subtrees such that the subtrees are randomly rescaled independent stable trees (see Miermont~\cite{Mier03,Mier05} for the first work on this topic).  One way to do this is using the \emph{spinal partitions} studied by
Haas, Pitman and Winkel~\cite{HaasPitmanWinkel}.  Consider the path between the root and a uniformly-chosen point (the \emph{spine}).  If we remove all of the points of degree 1 or 2 from this path, we split the tree into a forest where each tree has infinite degree at its root.  The masses of these trees form a random partition.  If we keep them in the order they arise along the branch, we obtain the \emph{coarse spinal interval partition}; if we rank them instead in decreasing order, we obtain the \emph{coarse spinal mass partition}.  If we remove the \emph{whole} path between the root and our uniformly-chosen point, we obtain a finer partition of mass which, when put in decreasing order, is called the \emph{fine spinal mass partition}.  We reproduce parts of Corollary 10 of \cite{HaasPitmanWinkel} which describe the distributions of these partitions.

\begin{thm}[Haas, Pitman \& Winkel] \label{thm:HPW}
Let $\alpha \in (1,2)$.  The following statements hold for the $\alpha$-stable tree $\mathcal{T}_{\alpha}$.
\begin{enumerate}
\item The distribution of the coarse spinal mass partition of $\mathcal{T}_{\alpha}$ is $\mathrm{PD}(1-1/\alpha,1-1/\alpha)$.
\item The coarse spinal interval partition is exchangeable.  Its $(1 -1/\alpha)$-diversity has the $\mathrm{ML}(1-1/\alpha, 1-1/\alpha)$ distribution and has the same distribution as the length of the spine.
\item The fine spinal mass partition is obtained from the coarse spinal mass partition by fragmenting every block with an independent $\mathrm{PD}(1/\alpha, 1/\alpha -1)$ random partition and then putting the blocks in decreasing order.
\item The (unconditional) distribution of the fine spinal mass partition is $\mathrm{PD}(1/\alpha, 1 - 1/\alpha)$.
\item Conditionally given that the fine spinal mass partition of $\mathcal{T}_{\alpha}$ is $(m_1, m_2, \ldots)$, the corresponding collection of subtrees obtained by removing the spine has the same distribution as $(m_1^{1-1/\alpha} \mathcal T^{(1)}, m_2^{1-1/\alpha} \mathcal T^{(2)}, \ldots)$, where $\mathcal T^{(1)}, \mathcal T^{(2)}, \ldots$ are independent copies of the $\alpha$-stable tree.
\end{enumerate}
\end{thm}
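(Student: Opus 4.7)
The plan is to derive the Haas--Pitman--Winkel description of the spinal partitions entirely from the line-breaking construction, exploiting the fact that the spine from the root to leaf~$1$ is, by construction, the initial segment of length $M_1$. The cleanest piece is the distributional part of claim~(2): the distance from the root to $X_1$ equals $L_1 = M_1$, and $M_1 \sim \mathrm{ML}(1 - 1/\alpha, 1 - 1/\alpha)$ by definition of the Markov chain. So the spine length has the required law by direct inspection.

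For claim~(1), I would track, for each $p \geq 2$, the random partition $\Pi_p$ of the non-spine leaves $\{2, \ldots, p\}$ according to which maximal off-spine subtree contains each of them. At step $p+1$, the new leaf attaches either inside an existing off-spine subtree (joining that block), strictly inside a spine edge (starting a new block, by creating a fresh branch-point on the spine), or at an existing spine branch-point (joining the corresponding block). Using the transition probabilities of version~(I) of the algorithm together with Proposition~\ref{prop:constru2} and Corollary~\ref{cor:distribLW}, I would compute these conditional probabilities and verify that they reproduce the generalized Chinese restaurant process with parameters $(1 - 1/\alpha, 1 - 1/\alpha)$. Pitman's characterisation then gives $\mathrm{PD}(1 - 1/\alpha, 1 - 1/\alpha)$ asymptotic frequencies, and these frequencies equal the coarse spinal masses almost surely by the law of large numbers for the i.i.d.\ samples $(X_i)$ from $\mu_\alpha$. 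The diversity statement in~(2) follows next: by Janson's urn theorem, the number $K_p$ of off-spine subtrees satisfies $K_p/p^{1 - 1/\alpha} \to W$ almost surely for some $W \sim \mathrm{ML}(1 - 1/\alpha, 1 - 1/\alpha)$. To identify $W$ with $M_1$, I would invoke Lemma~\ref{lem:alternativedefs}(i), which already realises $M_1$ as an almost sure limit of products of the $\beta_p$, and relate $K_p$ to the same products via the attachment dynamics; exchangeability of the coarse interval partition is automatic from the i.i.d.\ sampling of $(X_i)$.

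Claim~(5) should follow recursively from the construction: the subtree hanging off a given spine point is itself a line-breaking tree driven by a thinned sub-chain of $(M_p)$, and the independence across subtrees is a direct consequence of Remark~\ref{rem:independence} and the branching rules; the scaling factor $m_i^{1 - 1/\alpha}$ is forced by the ML scaling rule under size-biasing. Claim~(3) corresponds to the further fragmentation \emph{at} each spine branch-point: a fixed branch-point of degree $d_v$ in $\mathcal{T}_p$ carries, by Corollary~\ref{cor:distribLW2}, a Dirichlet weight vector whose coordinates parametrize the relative sizes of the sub-subtrees growing out of it, and controlling these weights as $p \to \infty$ (with $d_v \to \infty$) should recover the $\mathrm{PD}(1/\alpha, 1/\alpha - 1)$ fragmentation. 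Claim~(4) then follows by combining (1), (3), and the classical PD fragmentation identity, specialized to our parameters.

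The principal obstacle is the passage to infinite-degree branch-points needed for claim~(3): the line-breaking construction only ever produces vertices of finite degree, so recovering a genuine $\mathrm{PD}(1/\alpha, 1/\alpha - 1)$ fragmentation at a fixed spine branch-point requires a uniform-in-$p$ control of the Dirichlet weight vector around that vertex, together with an appropriate time-change keeping track of new leaves accumulating there. Coordinating this limit with the simultaneous pathwise identification $W = M_1$ underpinning~(2), and in particular keeping track of the correlations between the diversity and the individual block masses, is where I expect the main technical work to lie.
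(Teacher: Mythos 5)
The first thing to say is that the paper does not prove this statement at all: Theorem~\ref{thm:HPW} is quoted verbatim (as the authors say) from Corollary~10 of Haas, Pitman and Winkel \cite{HaasPitmanWinkel}, where it is established by continuum fragmentation-theoretic methods, and in this paper it is used only in Section~\ref{sec:complements} to give an \emph{alternative} sketch derivation of the length distributions. Your proposal runs the implication in the opposite direction, deriving the spinal decomposition from the line-breaking construction. That direction is not circular with the paper's main argument (Theorem~\ref{thm:main} rests on Theorem~\ref{thm:DuLG}, not on Theorem~\ref{thm:HPW}), and the core of your plan for claims (1) and (2) is sound: after averaging over lengths and weights the attachment dynamics reduce to Marchal's algorithm, and a direct weight count shows that the induced partition of the leaves $\{2,\ldots,p\}$ by coarse off-spine blocks is exactly a $(1-1/\alpha,1-1/\alpha)$ Chinese restaurant process (e.g.\ a block with $n_j$ leaves carries total weight $n_j\alpha-(\alpha-1)$ out of $p\alpha-1$). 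The distributional part of (2) is immediate since the spine is the initial segment of length $M_1$.

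The genuine gaps are in claims (3)--(5). For (5), your assertion that the subtree above a fixed spine point is ``a line-breaking tree driven by a thinned sub-chain of $(M_p)$'' is precisely the point at issue: the increments of $(M_p)$ restricted to the (random) times at which the new leaf falls into that subtree are not obviously distributed as a mass-rescaled copy of the original chain, and establishing this is equivalent to establishing the self-similarity of the construction --- which the authors explicitly flag in their final remark as \emph{not} obvious from the line-breaking algorithms and which they do not prove. Remark~\ref{rem:independence} gives independence of $(M_p,M_{p+1},\ldots)$ from the normalized lengths and shapes, not independence and self-similarity across the off-spine subtrees, so it cannot carry the weight you place on it. For (3) you correctly identify the obstacle (the passage to infinite-degree branch points and the identification of the within-vertex fragmentation as $\mathrm{PD}(1/\alpha,1/\alpha-1)$) but do not resolve it; (4) then also remains open since it is derived from (1) and (3). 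As it stands the proposal is a plausible programme for (1) and (2) and an unproved outline for (3)--(5); if the goal is simply to justify the theorem as the paper uses it, the honest route is the paper's own, namely citation of \cite{HaasPitmanWinkel}.
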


Notice that we should interpret the $(1-1/\alpha)$-diversity of the Poisson-Dirichlet partitions involved here as \emph{lengths} in the tree.  In a moment, we will use Theorem~\ref{thm:HPW} to go into more detail about the relationships between lengths and masses in $\mathcal{T}_{\alpha}$.  Before we do so, it will be useful to prove another distributional relationship for Dirichlet and Mittag-Leffler random variables.

\begin{prop} \label{prop:lengthsmasses}
Let $\beta \in (0,1)$ and $\theta > 0$.  For $n \ge 2$, let $\theta_1, \theta_2, \ldots, \theta_n > 0$ and such that $\sum_{i=1}^n \theta_i = \theta$.  Let $M \sim \mathrm{ML}(\beta,\theta)$ and $(Z_1, Z_2, \ldots, Z_n)  \sim \mathrm{Dir}\left(\frac{\theta_1}{\beta}, \ldots, \frac{\theta_n}{\beta}\right)$ independently. Let $(X_1, X_2, \ldots, X_n) \sim \mathrm{Dir}(\theta_1, \theta_2, \ldots, \theta_n)$ and $M^{(1)}, M^{(2)}, \ldots, M^{(n)}$ be independent random variables, where $M^{(i)} \sim \mathrm{ML}(\beta, \theta_i)$ for $1 \le i \le n$.
Then
\[
M \cdot \left( Z_1, Z_2, \ldots, Z_n \right) \equidist \left(X_1^{\beta} M^{(1)}, X_2^{\beta} M^{(2)}, \ldots, X_{n}^{\beta} M^{(n)} \right).
\]
\end{prop}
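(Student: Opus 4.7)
My plan is to lift both sides by multiplying by an independent factor $G^{\beta}$ with $G \sim \mathrm{Gamma}(\theta)$, show that each lift reduces to the same vector of independent Gamma random variables, and then cancel the auxiliary factor at the level of moments. The two main ingredients are the Gamma representation of the Dirichlet recalled at the beginning of Section~\ref{sec:DR} and Lemma~\ref{lem:GammaML}.

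I would first realise the right-hand side via the standard Gamma representation: take independent $\Gamma_{\theta_1}, \ldots, \Gamma_{\theta_n}$ with $\Gamma_{\theta_i} \sim \mathrm{Gamma}(\theta_i)$, set $G := \sum_{i=1}^n \Gamma_{\theta_i}$ and $X_i := \Gamma_{\theta_i}/G$, so that $(X_1, \ldots, X_n) \sim \mathrm{Dir}(\theta_1, \ldots, \theta_n)$ is independent of $G \sim \mathrm{Gamma}(\theta)$ by (\ref{eqn:betagamma}); the $M^{(i)}$ are taken independent of the $\Gamma_{\theta_j}$. On the left, introduce an auxiliary $\tilde G \sim \mathrm{Gamma}(\theta)$ independent of $M$ and $(Z_1, \ldots, Z_n)$. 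By Lemma~\ref{lem:GammaML}, $\tilde G^{\beta} M \sim \mathrm{Gamma}(\theta/\beta)$ is independent of $(Z_1, \ldots, Z_n) \sim \mathrm{Dir}(\theta_1/\beta, \ldots, \theta_n/\beta)$, and applying (\ref{eqn:betagamma}) in reverse turns $\tilde G^{\beta} M \cdot (Z_1, \ldots, Z_n)$ into a vector of independent $\mathrm{Gamma}(\theta_i/\beta)$ random variables. On the right, $G^{\beta} X_i^{\beta} M^{(i)} = \Gamma_{\theta_i}^{\beta} M^{(i)}$, and a coordinate-by-coordinate application of Lemma~\ref{lem:GammaML} yields the same vector of independent $\mathrm{Gamma}(\theta_i/\beta)$ variables.

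To cancel the auxiliary factor, observe that for any $(k_1, \ldots, k_n) \in \N^n$ with $K := \sum_i k_i$, independence gives
\[
\E{\tilde G^{\beta K}} \cdot \E{\prod_{i=1}^n (M Z_i)^{k_i}} = \E{G^{\beta K}} \cdot \E{\prod_{i=1}^n (X_i^{\beta} M^{(i)})^{k_i}}.
\]
Since $\tilde G \equidist G$ and $\E{G^{\beta K}} = \Gamma(\theta + K\beta)/\Gamma(\theta) \in (0, \infty)$, the joint moments of the two sides of the proposition coincide. The main subtlety is then to conclude equality in distribution from moment equality: every coordinate on the left is dominated by $M$ and every coordinate on the right by the corresponding $M^{(i)}$, and Stirling applied to the explicit $k$th moment formula of $\mathrm{ML}(\beta, \theta)$ from Section~\ref{sec:MLD} shows that its moment generating function is entire. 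Hence the multivariate moment generating function of either side is finite in an open neighbourhood of the origin, which forces the joint law on $[0,\infty)^n$ to be moment-determined.
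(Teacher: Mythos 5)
Your proof is correct and follows essentially the same route as the paper's: multiply both sides by an independent $\mathrm{Gamma}(\theta)^{\beta}$ factor, use the beta--gamma algebra and Lemma~\ref{lem:GammaML} to identify both lifted vectors with the same vector of independent $\mathrm{Gamma}(\theta_i/\beta)$ variables, and then cancel the auxiliary factor via moments. Your explicit justification of the cancellation step (finite, positive moments of $G^{\beta K}$ plus moment-determinacy from the entire moment generating function of $\mathrm{ML}(\beta,\theta)$) is a careful spelling-out of what the paper dispatches with a brief appeal to moments and Cram\'er--Wold.
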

\begin{proof}
By (\ref{eqn:betagamma}), we may take
\[
(X_1, X_2, \ldots, X_{n}) = \frac{1}{\sum_{i=1}^{n} \Gamma_{\theta_i}} \left(\Gamma_{\theta_1}, \ldots, \Gamma_{\theta_{n}} \right),
\]
and the normalized vector is independent of 
\[
\Gamma_{\theta} := \sum_{i=1}^{n} \Gamma_{\theta_i} \sim \mathrm{Gamma}(\theta).
\]
By Lemma~\ref{lem:GammaML},
\[
\Gamma_{\theta}^{\beta} \cdot \left(X_1^{\beta} M^{(1)}, X_2^{\beta} M^{(2)}, \ldots, X_{n}^{\beta} M^{(n)} \right)
\equidist \left( \tilde{\Gamma}_{\theta_1/\beta}, \ldots, \tilde{\Gamma}_{\theta_{n}/\beta} \right),
\]
where the right-hand side is composed of independent Gamma random variables with the given parameters. But then
\begin{align*}
 \left( \tilde{\Gamma}_{\theta_1/\beta}, \ldots, \tilde{\Gamma}_{\theta_{n}/\beta} \right) & = \left( \sum_{i=1}^{n} \tilde{\Gamma}_{\theta_i/\beta} \right) \cdot \frac{1}{\sum_{i=1}^{n} \tilde{\Gamma}_{\theta_i/\beta}} \left( \tilde{\Gamma}_{\theta_1/\beta}, \ldots, \tilde{\Gamma}_{\theta_{n}/\beta} \right) \\
& \equidist \Gamma_{\theta}^{\beta} \cdot M \cdot \frac{1}{\sum_{i=1}^{n} \tilde{\Gamma}_{\theta_i/\beta}}  \left( \tilde{\Gamma}_{\theta_1/\beta}, \ldots, \tilde{\Gamma}_{\theta_{n}/\beta} \right) \equidist \Gamma_{\theta}^{\beta} \cdot M \cdot Z,
\end{align*}
where the three factors on the right-hand side are independent.  But then considering moments and applying the Cram\'er-Wold theorem, we can remove the factor $\Gamma_{\theta}^{\beta}$ on both sides of this identity in law to get the desired result.
\end{proof}

Note that this immediately yields a different representation for the joint distribution of the lengths and weights of the tree $\mathcal{T}_p$ given in Proposition~\ref{prop:constru2}.

We also observe the standard fact about Poisson-Dirichlet distributions that a size-biased pick from amongst the blocks of a $\mathrm{PD}(\beta,\theta)$ partition has $\mathrm{Beta}(1-\beta,\beta + \theta)$ distribution. We will now use these ingredients to give a sketch of a quite different proof that the edge-lengths in the stable tree have the same distribution as those generated by our construction.  (This is part of Proposition~\ref{prop:lengths}.)

\begin{prop}
\label{prop:otherproof}
Conditionally on the shapes $T_{\alpha,1}, \ldots, T_{\alpha,p}$, we have that the lengths $L^{(\alpha,p)} = (L^{(\alpha,p)}_e, e \in E(T_{\alpha,p}))$ (with an arbitrary labelling) have joint distribution
\[
L^{(\alpha,p)} \equidist M_p \cdot \left(Z_1^{(p)}, Z_2^{(p)}, \ldots, Z_{|T_{\alpha,p}|}^{(p)} \right),
\]
where $M_p \sim \mathrm{ML}(1 -1/\alpha, p-1/\alpha)$ and
\[
\left(Z_1^{(p)}, \ldots, Z_{|T_{\alpha,p}|}^{(p)}, 1 - \sum_{i=1}^{|T_{\alpha,p}|} Z_i^{(p)}\right) \sim \mathrm{Dir}\left(1, \ldots, 1, \frac{p\alpha-1}{\alpha-1} - |T_{\alpha,p}| \right)
\]
are independent.
\end{prop}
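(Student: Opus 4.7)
I would prove this by induction on $p\ge 1$, using the spinal decomposition of Theorem~\ref{thm:HPW} to peel off contributions along the root-to-$X_1$ path and Proposition~\ref{prop:lengthsmasses} to collapse the resulting layered product of scaled $\mathrm{ML}$ and Dirichlet pieces into a single $M_p\cdot Z^{(p)}$. For the base case $p=1$, $\mathcal{T}_{\alpha,1}$ is the spine from the root to $X_1$, whose length is $\mathrm{ML}(1-1/\alpha,1-1/\alpha)$-distributed by Theorem~\ref{thm:HPW}(2). Since $|T_{\alpha,1}|=1$ and the residual Dirichlet parameter $\frac{p\alpha-1}{\alpha-1}-|T_{\alpha,1}|$ vanishes, $Z_1^{(1)}=1$ almost surely and the claim is immediate.

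For the inductive step, I would condition on $T_{\alpha,p}$ and decompose $\mathcal{T}_\alpha$ along the spine from the root to $X_1$. By Theorem~\ref{thm:HPW}(5), conditionally on the fine spinal mass partition $(m_i)_{i\ge 1}$, the connected components of the complement of the spine are independent rescalings $m_i^{1-1/\alpha}\mathcal{T}^{(i)}$ of the stable tree. Each of $X_2,\ldots,X_p$ independently lands in component $i$ with probability $m_i$, and by self-similarity the spanned tree on the $p_j\ge 1$ leaves falling into the $j$-th visited fine component has the law of $m_j^{1-1/\alpha}\mathcal{T}_{\alpha,p_j}^{(j)}$ for an independent $\mathcal{T}^{(j)}$. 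Applying the inductive hypothesis to each such subtree writes its edge-lengths as $m_j^{1-1/\alpha}\cdot M^{(j)}\cdot Z^{(j)}$ with the prescribed joint law. For the spine segments between the visited coarse branch points of $T_{\alpha,p}$ (noting that several visited fine subtrees may share a coarse branch point), exchangeability of the coarse spinal interval partition (Theorem~\ref{thm:HPW}(2)) together with the fine fragmentation of Theorem~\ref{thm:HPW}(3) identifies their joint distribution as another scaled $\mathrm{ML}\cdot\mathrm{Dir}$ product.

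Assembling these pieces, the edge-length vector of $\mathcal{T}_{\alpha,p}$ is now a concatenation of independent $(\mathrm{mass})^{1-1/\alpha}\cdot\mathrm{ML}\cdot\mathrm{Dir}$ factors, where the masses form Dirichlet-distributed weights coming from the PD spinal partitions. Iterative application of Proposition~\ref{prop:lengthsmasses} with $\beta=1-1/\alpha$ then collapses each layer: first merging the contributions across fine sub-blocks within a single coarse block, then across coarse blocks along the spine, invoking the additive property of the Dirichlet distribution noted after~\eqref{eqn:betagamma} to glue the Dirichlet parameters together. The $\mathrm{ML}$ parameters telescope to $p-1/\alpha$, producing $M_p\sim\mathrm{ML}(1-1/\alpha,p-1/\alpha)$, while the Dirichlet parameters assemble to the target $(1,\ldots,1,\frac{p\alpha-1}{\alpha-1}-|T_{\alpha,p}|)$.

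The main obstacle will be this final parameter accounting through the layered PD structure: the coarse masses follow $\mathrm{PD}(1-1/\alpha,1-1/\alpha)$, each coarse block is fragmented by an independent $\mathrm{PD}(1/\alpha,1/\alpha-1)$, and the leaves $X_2,\ldots,X_p$ induce a size-biased sampling on top. One must verify that these three layers combine so that the ML parameter telescopes to exactly $p-1/\alpha$ rather than to a nearby quantity like $p-(K+1)/\alpha$, and that the residual Dirichlet mass comes out to exactly $\frac{p\alpha-1}{\alpha-1}-|T_{\alpha,p}|$. This bookkeeping, rather than any single probabilistic identity, is the technical heart of the argument; once it is in place, the rest is a routine application of Proposition~\ref{prop:lengthsmasses} and standard Dirichlet aggregation.
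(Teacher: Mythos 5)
Your proposal follows essentially the same route as the paper's own (explicitly labelled) sketch proof: both rest on the spinal decompositions of Theorem~\ref{thm:HPW} together with Proposition~\ref{prop:lengthsmasses} to collapse products of rescaled masses and independent $\mathrm{ML}$ variables into a single $M_p\cdot\mathrm{Dir}$ factor, the only organisational difference being that you recurse on all $p$ leaves at once relative to the first spine, whereas the paper adds one leaf at a time and carries out the case $p=2$ explicitly. The parameter bookkeeping you flag as the ``technical heart'' is precisely what the paper also leaves open (``the laws of $L^{(\alpha,p)}$ may be determined in a similar manner''), so your outline matches the published argument in both method and level of completeness; note that the fully rigorous identification of these laws in the paper is instead Proposition~\ref{prop:lengths}, obtained from the Duquesne--Le Gall density of Theorem~\ref{thm:DuLG} by a moment computation.
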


\begin{proof}[Sketch proof]
When we pick a first uniform leaf, clearly we simply pick out the spine, which has length distributed as $\mathrm{ML}(1-1/\alpha,1-1/\alpha)$.  Consider what happens at the second step of the line-breaking construction: we need to add the part of the tree which links a second uniform leaf to the spine.  When we pick this leaf, we select one of the subtrees which branch off the spine (in the coarse sense) in a size-biased manner (i.e.\ we pick a subtree of mass $m$ with probability $m$).  So we pick a size-biased block from the coarse spinal partition.  Since by Theorem~\ref{thm:HPW} the coarse spinal mass partition is exchangeable, this subtree sits at a uniform position amongst the subtrees branching off the spine.  Moreover, the coarse spinal mass partition is distributed as $\mathrm{PD}(1-1/\alpha,1-1/\alpha)$, and so the masses $X_1$ and $X_2$ of the collections of subtrees above and below the picked subtree along the spine and the mass at the vertex are such that $(X_1, X_2,1-X_1-X_2) \sim \mathrm{Dir}(1-1/\alpha,1-1/\alpha, 1/\alpha)$.  Moreover, if we split the tree by removing the picked subtree entirely, we get two $\alpha$-independent stable trees, rescaled to have total masses $X_1$ and $X_2$ respectively.  It follows that
\begin{equation} \label{eqn:MLRDE}
M \equidist X_1^{1-1/\alpha} M^{(1)} + X_2^{1-1/\alpha} M^{(2)},
\end{equation}
where $M^{(1)}, M^{(2)}$ are independent $\mathrm{ML}(1 - 1/\alpha, 1- 1/\alpha)$ random variables, independent of $(X_1, X_2)$.

The subtree branching off the spine at our chosen vertex is, of course, composed of infinitely many subtrees (in the sense of the fine spinal partition).  Since the one we are interested in contains a vertex picked according to the mass measure, it sits inside a size-biased one of these trees.  To get the masses of these subtrees, Theorem~\ref{thm:HPW} says that we need to split the mass at our vertex with an independent $\mathrm{PD}(1/\alpha,1/\alpha-1)$.  A size-biased block, $B^*$, from this has law $\mathrm{Beta}(1-1/\alpha,2/\alpha-1)$.  So the relevant subtree is an independent stable tree, randomly rescaled to have mass $X_3: = (1-X_1-X_2) B^*$ and lengths rescaled by $X_3^{1-1/\alpha}$.  So the branch leading to our uniform vertex has length
\[
X_3 ^{1-1/\alpha} M^{(3)}
\]
where $M^{(3)}$ has $\mathrm{ML}(1-1/\alpha,1-1/\alpha)$ distribution, independently of everything else. Let $X_4 = (1-X_1-X_2) (1-B^*)$. Then $(X_1, X_2, X_3, X_4) \sim \mathrm{Dir}(1-1/\alpha, 1- 1/\alpha, 1-1/\alpha, 2/\alpha-1)$ and
\[
(X_1^{1-1/\alpha} M^{(1)}, X_2^{1-1/\alpha} M^{(2)}, X_3^{1-1/\alpha} M^{(3)})
\]
represents the lengths of the three edges present in the tree spanned by two uniform points and the root.  By Proposition~\ref{prop:lengthsmasses}, $L^{(\alpha,2)}$ does indeed have the claimed law.  The laws of $L^{(\alpha,p)}$ may be determined in a similar manner.
\end{proof}

\begin{rem} One aspect of this viewpoint remains mysterious to us: if we let $M^{(4)} \sim \mathrm{ML}(1-1/\alpha, 2/\alpha-1)$ then
\[
(X_1^{1-1/\alpha} M^{(1)}, X_2^{1-1/\alpha} M^{(2)}, X_3^{1-1/\alpha} M^{(3)}, X_4^{1-1/\alpha} M^{(4)})
\]
has the same distribution as the three lengths and the node weight after a single step of version (II) of our construction.  What (if anything) is the correct interpretation of the random variable $M^{(4)}$ in the tree? The $\mathrm{ML}(1-1/\alpha,2/\alpha-1)$ distribution does \emph{not} appear to be connected to the remaining parts of the fine spinal mass partition at the chosen vertex, which are distributed as a scaled copy of $\mathrm{PD}(1/\alpha,2/\alpha-1)$.  Is there a sensible way to associate a notion of ``length'' with the vertex?
\end{rem}

\begin{rem}
Proposition \ref{prop:otherproof} can equally be proved by considering the scaling limits of distances between pairs of points in Marchal's algorithm and using asymptotic results on generalized P\'olya urns. This gives an alternative explanation for the presence of generalized Mittag-Leffler distributions. 
\end{rem}

\begin{rem}
The self-similarity of the stable trees is not immediately obvious from our constructions.  However, we believe that it could be proved using the analogue of the arguments presented in Section 6 of \cite{ABBrGo} for the Brownian CRT.  We do not pursue this further here.
\end{rem}

\section*{Acknowledgments}
C.G.\ would like to thank the CEREMADE at Universit\'e Paris-Dauphine for having made her invited professor for the month of February 2014, during which time most of this work was done.

\bibliography{stable}

\end{document}